\newtheorem{theorem}{Theorem}[section]
\newtheorem{lemma}[theorem]{Lemma}
\newtheorem{proposition}[theorem]{Proposition}
\newtheorem{corollary}[theorem]{Corollary}
\theoremstyle{definition}
\newtheorem{example}[theorem]{Example}
\newtheorem{remark}[theorem]{Remark}
\begin{document}

\title[Varieties of bicommutative algebras]
{Varieties of bicommutative algebras}
\author{Vesselin Drensky}
\date{}
\address{Institute of Mathematics and Informatics,
Bulgarian Academy of Sciences,
Acad. G. Bonchev Str., Block 8,
1113 Sofia, Bulgaria}
\email{drensky@math.bas.bg}
\subjclass[2010]
{17A30, 17A50, 20C30.}
\keywords{Free bicommutative algebras, varieties of bicommutative algebras, codimension sequence, codimension growth, two-dimensional algebras.}
\thanks
{Partially supported by Grant I02/18
``Computational and Combinatorial Methods
in Algebra and Applications''
of the Bulgarian National Science Fund.}
\maketitle

\begin{abstract}
Bicommutative algebras are nonassociative algebras satisfying the polynomial identities
of right- and left-commutativity $(x_1x_2)x_3=(x_1x_3)x_2$ and $x_1(x_2x_3)=x_2(x_1x_3)$.
Let $\mathfrak B$ be the variety of all bicommutative algebras over a field $K$ of characteristic 0 and let $F({\mathfrak B})$ be the free
algebra of countable rank in $\mathfrak B$. We prove that if $\mathfrak V$ is a subvariety of $\mathfrak B$
satisfying a polynomial identity $f=0$ of degree $k$, where $0\not=f\in F({\mathfrak B})$, then the codimension sequence
$c_n({\mathfrak V})$, $n=1,2,\ldots$,
is bounded by a polynomial in $n$ of degree $k-1$. Since $c_n({\mathfrak B})=2^n-2$ for $n\geq 2$, and $\exp({\mathfrak B})=2$,
this gives that $\exp({\mathfrak V})\leq 1$, i.e., $\mathfrak B$ is minimal with respect to the codimension growth.
When the field $K$ is algebraically closed there are only three pairwise nonisomorphic two-dimensional bicommutative algebras $A$ which are nonassociative.
They are one-generated and with the property $\dim A^2=1$. We present bases of their polynomial identities and show that one of these algebras
generates the whole variety $\mathfrak B$.
\end{abstract}

\section{Introduction}

Bicommutative algebras are nonassociative algebras over a field $K$ satisfying the polynomial identities
of right- and left-commutativity
\begin{equation}\label{identities of bicommutativity}
(x_1x_2)x_3=(x_1x_3)x_2,\quad x_1(x_2x_3)=x_2(x_1x_3).
\end{equation}
In the sequel we consider algebras over a field $K$ of characteristic 0 only.
One-sided commutative algebras appeared first in the paper by Cayley \cite{Ca} in 1857.
In the modern language this is the right-symmetric Witt algebra $W_1^{\text{rsym}}$ in one variable.
Maybe the most important examples of one-side commutative algebras are Novikov algebras which are
left-commutative and right-symmetric. The latter means that the algebras satisfy the polynomial identity
$(x_1,x_2,x_3)=(x_1,x_3,x_2)$ for the associator $(x_1,x_2,x_3)=(x_1x_2)x_3-x_1(x_2x_3)$.
The motivation to study Novikov algebras comes from the needs of the Hamiltonian
operator in mechanics and the equations of hydrodynamics, see
Dzhumadil'daev, Ismailov, and Tulenbaev \cite{DIT} and Drensky and Zhakhayev \cite{DZ} for details.
By Kaygorodov and Volkov \cite{KaV} when the base field $K$ is algebraically closed of arbitrary characteristic
up to isomorphism there are only seven two-dimensional bicommutative algebras $A$ with nontrivial multiplication.
Four of them are associative-commutative.
Changing the notation and the bases of the algebras in \cite{KaV} the three nonassociative two-dimensional bicommutative algebras
\[
A_{\pi,\varrho}, \quad (\pi,\varrho)=(0,1), (1,0), (1,-1),
\]
are generated by one element $r$ and satisfy the condition $\dim A_{\pi,\varrho}^2=1$.
Their multiplication rules are
\begin{equation}\label{two-dimensional algebras}
rr^2=\pi r^2,r^2r=\varrho r^2,r^2r^2=\pi\varrho r^2.
\end{equation}
In is easy to see that the same holds over an arbitrary field $K$ of characteristic 0: Up to isomorphism
the three algebras $A_{0,1},A_{1,0},A_{1,-1}$, are the only
one-generated nonassociative two-dimensional bicommutative algebras $A$ with $\dim A^2=1$.

The structure of the free bicommutative algebra and the most important numerical invariants of the T-ideal of the polynomial identities
were described by Dzhumadil'daev, Ismailov, and Tulenbaev \cite{DIT}, see also the announcement \cite{DT}.
In \cite{DZ}, jointly with Zhakhayev, we proved that finitely generated bicommutative algebras are weakly noetherian, i.e., satisfy the ascending chain
condition for two-sided ideals, and answer into affirmative the finite basis problem for varieties of bicommutative algebras
over a field of arbitrary characteristic.

One of the most important measures for the complexity of the polynomial identities of a variety $\mathfrak V$ of $K$-algebras
is the codimension sequence $c_n({\mathfrak V})$,  $n=1,2,\ldots$, where
$c_n({\mathfrak V})$ is the dimension of the multilinear polynomials of degree $n$ in the free algebra $F_n({\mathfrak V})$ of rank $n$.
As a first approximation to the more precise estimate
of the growth of the codimensions one studies the behaviour of $\displaystyle \sqrt[n]{c_n({\mathfrak V})}$.
In the special case when
\[
\exp({\mathfrak V})=\lim_{n\to\infty}\sqrt[n]{c_n({\mathfrak V})}
\]
exists it is called the {\it exponent} of $\mathfrak V$, see Giambruno and Zaicev \cite{GZ1, GZ2}
who proved that for associative PI-algebras the exponent always exists and is an integer.
Following \cite{GZ3} the variety $\mathfrak V$ is {\it minimal of a given exponent} if
$\exp({\mathfrak W})<\exp({\mathfrak V})$ for all proper subvarieties $\mathfrak W$ of $\mathfrak V$.
(In \cite{D1} we called such varieties {\it extremal}.)

It was shown in \cite{DIT} that for the variety $\mathfrak B$ of all bicommutative algebras
\[
c_1({\mathfrak B})=1 \text{ and } c_n({\mathfrak B})=2^n-2,\quad n=2,3,\ldots.
\]
Hence $\exp({\mathfrak B})=2$. Our first main result is that the variety $\mathfrak B$ is minimal of exponent 2.
More precisely we show that if $\mathfrak V$ is a subvariety of $\mathfrak B$
satisfying a polynomial identity $f=0$ of degree $k$, where $0\not=f\in F({\mathfrak B})=F_{\infty}({\mathfrak B})$, then the codimension sequence
$c_n({\mathfrak V})$, $n=1,2,\ldots$,
is bounded by a polynomial in $n$ of degree $k-1$. The results of \cite{DIT} give that the variety $\mathfrak B$ is generated
by the free algebra $F_2({\mathfrak B})$ of rank 2. We slightly improve this and show that $\mathfrak B$ is generated
by the free algebra $F_1({\mathfrak B})$ of rank 1. As a byproduct of our approach, starting with the basis of  $F({\mathfrak B})$ in \cite{DIT}
we give a new proof of the description of the cocharacter sequence $\chi_n({\mathfrak B})$, $n=1,2,\ldots$.
Finally we study the polynomial identities of the two-dimensional algebras $A_{\pi,\varrho}$
with multiplication defined by (\ref{two-dimensional algebras}).
We show that the algebra $A_{1,-1}$ generates the whole variety $\mathfrak B$. The varieties
$\text{var}(A_{0,1})$ and $\text{var}(A_{1,0})$ generated respectively by the algebras $A_{0,1}$ and $A_{1,0}$
are defined as subvarieties of $\mathfrak B$ by the polynomial identities $x_1(x_2x_3)=0$ and $(x_1x_2)x_3=0$,
i.e., they are equal, respectively, to the varieties of left-nilpotent and right-nilpotent of class 3 bicommutative algebras.

\section{Preliminaries}

We fix a field $K$ of characteristic 0. All algebras, vector spaces, and tensor products will be over $K$.
Traditionally, one states the results on polynomial identities and cocharacter sequences in the language of representation theory
of the symmetric group $S_n$. Instead, we shall work with representation theory
of the general linear group $\text{\rm GL}_d=\text{\rm GL}_d(K)$. Then using the approach developed by Berele \cite{B} and the author \cite{D0}
we shall translate easily the results in terms of representations of $S_n$.
We start with the necessary background on representation theory
of $\text{\rm GL}_d$ acting canonically on the $d$-dimensional vector space $KX_d$
with basis $X_d=\{x_1,\ldots,x_d\}$.
We refer, e.g., to the books by Macdonald \cite{M} for general facts and by the author \cite{D2} for applications in the spirit of the problems
considered here.
All $\text{\rm GL}_d$-modules which appear in this paper are
completely reducible and are direct sums of irreducible polynomial
modules. The irreducible polynomial representations of $\text{\rm GL}_d$ are
indexed by partitions $\lambda=(\lambda_1,\ldots,\lambda_d)$,
$\lambda_1\geq \cdots\geq \lambda_d\geq 0$. We denote by
$W(\lambda)=W_d(\lambda)$ the corresponding irreducible $\text{\rm GL}_d$-module.
The action of $\text{\rm GL}_d$ on $KX_d$ is extended diagonally on the tensor algebra of $KX_d$
and, up to isomorphism, all $W(\lambda)$ can be found there. The tensor algebra of $KX_d$
is isomorphic, also as a $\text{\rm GL}_d$-module, to the free associative algebra $K\langle X_d\rangle=K\langle x_1,\ldots,x_d\rangle$.
Since the diagonal action of $\text{\rm GL}_d$ on the tensor algebra is not affected by the parentheses, we may work also in the absolutely free
algebra $K\{X_d\}$ and in the relatively free algebra $F_d({\mathfrak V})$ of any variety $\mathfrak V$.

The module $W(\lambda)\subset K\{X_d\}$ is generated by a
unique, up to a multiplicative constant,
multihomogeneous element $w_{\lambda}$ of degree $\lambda=(\lambda_1,\ldots,\lambda_d)$, i.e.,
homogeneous of degree $\lambda_k$ with respect to each variable $x_k$,
called the {\it highest weight vector} of $W(\lambda)$.
In order to state the characterization of the highest weight vectors we recall that for an algebra $R$
the linear operator $\delta:R\to R$ is a derivation
if $\delta(r_1r_2)=\delta(r_1)r_2+r_1\delta(r_2)$ for all $r_1,r_2\in R$.
If $\delta:KX_d\to KX_d$ is any linear operator of the $d$-dimensional vector space, then $\delta$ can be extended
in a unique way to a derivation of $K\langle X_d\rangle, K\{X_d\}$, and of any relatively algebra $F_d({\mathfrak V})$.
The following lemma is a partial case of a result by
De Concini, Eisenbud, and Procesi \cite{DEP}, see also
Almkvist, Dicks, and Formanek \cite{ADF}.
In the version which we need, the first part of the lemma was established
by Koshlukov \cite{K}.

\begin{lemma}\label{criterion for hwv} {\rm (see, e.g., Benanti and Drensky \cite{BD})}
Let $1\leq i<j\leq d$ and let
$\Delta_{x_j\to x_i}$ be the derivation of $K\{X_d\}$
defined by $\Delta_{x_j\to x_i}(x_j)=x_i$, $\Delta_{x_j\to x_i}(x_k)=0$, $k\not=j$.
If $w(X_d)=w(x_1,\ldots,x_d) \in K\{X_d\}$
is multihomogeneous of degree $\lambda_k$ with respect to $x_k$,
then $w(X_d)$ is a highest weight vector for some
$W(\lambda)$ if and only if $\Delta_{x_j\to x_i}(w(X_d))=0$
for all $i<j$. Equivalently, $w(X_d)$ is a highest weight vector
for $W(\lambda)$ if and only if
\[
g_{ij}(w(X_d))=w(X_d),\quad 1\leq i<j\leq d,
\]
where $g_{ij}$ is the linear operator of the $KX_d$ which
sends $x_j$ to $x_i+x_j$ and fixes the other $x_k$.
\end{lemma}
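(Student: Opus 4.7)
My plan is to prove both equivalent forms of the criterion by reducing to a standard Borel/unipotent fixed-point characterization of highest weight vectors, and then showing that the derivation condition and the group-element condition are literally equivalent on multihomogeneous polynomials.

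First, I would recall the representation-theoretic fact that an irreducible polynomial $GL_d$-module $W(\lambda)$ contains a unique (up to scalar) vector of weight $\lambda$ fixed by the upper unipotent subgroup $U_d^+$, and this vector is precisely the highest weight vector. Since the tensor algebra of $KX_d$ (and hence $K\{X_d\}$, as well as $F_d(\mathfrak{V})$) decomposes as a direct sum of irreducible polynomial $GL_d$-modules, a multihomogeneous element $w$ of multidegree $\lambda$ is a highest weight vector of some $W(\lambda)$ if and only if $w$ is a $T_d$-weight vector of weight $\lambda$ (which is exactly the multihomogeneity hypothesis) and is fixed by every element of $U_d^+$. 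Because $U_d^+$ is generated by the elementary unipotent transformations $g_{ij}$ with $1 \le i < j \le d$, this is equivalent to $g_{ij}(w) = w$ for all $i<j$, which establishes the second form of the lemma.

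To connect the two conditions, I would verify that $g_{ij}$ acts on $K\{X_d\}$ as the exponential of the derivation $\Delta = \Delta_{x_j \to x_i}$. Since $\Delta(x_j) = x_i$ and $\Delta(x_i) = 0$, we have $\Delta^2(x_j) = 0$; hence $\exp(\Delta)$ sends $x_j$ to $x_j + x_i$ and fixes all other $x_k$, coinciding with the automorphism $g_{ij}$ on generators, and therefore on all of $K\{X_d\}$. Consequently, for any polynomial $w$,
\[
g_{ij}(w) - w = \sum_{s \ge 1} \frac{1}{s!}\Delta^s(w),
\]
a finite sum because $\Delta$ strictly decreases the $x_j$-degree. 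If $w$ is multihomogeneous of multidegree $\lambda$, then $\Delta^s(w)$ is multihomogeneous of a different multidegree for each $s \ge 1$ (the $x_j$-degree drops by $s$ and the $x_i$-degree increases by $s$), so these terms lie in pairwise distinct multihomogeneous components. Comparing components, $g_{ij}(w) = w$ forces $\Delta^s(w) = 0$ for all $s \ge 1$, and conversely $\Delta(w) = 0$ implies $\Delta^s(w) = 0$ for all $s \ge 1$. This gives the first form.

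The only conceptually delicate point is the appeal to the Borel-theoretic characterization of highest weight vectors in a completely reducible polynomial representation; everything else is a straightforward manipulation. I expect the main bookkeeping obstacle to be the multidegree comparison in the Taylor expansion above, which is what makes the exponential-derivation identity usable in the nonassociative setting $K\{X_d\}$ (where the derivation extends unambiguously because $\delta$ was prescribed on the generators and extended to products by the Leibniz rule).
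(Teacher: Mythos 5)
The paper offers no proof of this lemma at all --- it is quoted from the literature (De Concini--Eisenbud--Procesi, Almkvist--Dicks--Formanek, Koshlukov) with a pointer to \cite{BD} --- so there is nothing to compare against; your argument is the standard one and is essentially correct. The reduction to the Borel-theoretic characterization (weight vector of weight $\lambda$ fixed by the upper unitriangular subgroup $U_d^+$) is legitimate because $K\{X_d\}$ is a direct sum of tensor powers of $KX_d$ and hence completely reducible in characteristic $0$, and your identification $g_{ij}=\exp(\Delta_{x_j\to x_i})$ together with the multidegree comparison correctly yields $g_{ij}(w)=w\iff\Delta_{x_j\to x_i}(w)=0$. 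The one imprecision is the sentence asserting that $U_d^+$ is generated by the $g_{ij}$: the transvections with parameter $t=1$ alone do \emph{not} generate $U_d^+$ over an infinite field (already for $d=2$ they generate only a copy of $\mathbb{Z}$ inside $(K,+)$), so ``fixed by all $g_{ij}$'' does not follow from generation alone. This is harmless, because your own exponential computation closes the gap: $\Delta_{x_j\to x_i}(w)=0$ implies $(I+tE_{ij})(w)=\exp\bigl(t\Delta_{x_j\to x_i}\bigr)(w)=w$ for every $t\in K$, and the full family $I+tE_{ij}$ does generate $U_d^+$; you should just route the equivalence through the derivation condition rather than through the $t=1$ elements. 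With that one-line repair the proof is complete.
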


If $W_i$, $i=1,\ldots,m$,
are $m$ isomorphic copies of the $\text{\rm GL}_d$-module $W(\lambda)$
and $w_i\in W_i$ are highest weight
vectors, then the highest weight vector of any submodule $W(\lambda)$
of the direct sum $W_1\oplus\cdots\oplus W_m$ has the form
$\xi_1w_1+\cdots+\xi_mw_m$ for some $\xi_i\in K$.
Any $m$ linearly independent highest weight vectors can serve
as a set of generators of the $\text{\rm GL}_d$-module $W_1\oplus\cdots\oplus W_m$.
The algebra $F_d({\mathfrak V})$ decomposes as a $\text{\rm GL}_d$-module as
\begin{equation}\label{GL-decomposition of free algebras}
F_d({\mathfrak V})=\bigoplus_{\lambda}m_{\lambda}({\mathfrak V})W(\lambda),
\end{equation}
where the summation runs on all partitions $\lambda$ in not more than $d$ parts and the nonnegative integer $m_{\lambda}({\mathfrak V})$
is the multiplicity of $W(\lambda)$ in the decomposition of $F_d({\mathfrak V})$.
The canonical multigrading of $F_d({\mathfrak V})$ which counts the degree of each variable in $X_d$
agrees with the action of $\text{\rm GL}_d$ in the following way. Let
\[
H(F_d({\mathfrak V}),T_d)=H(F_d({\mathfrak V}),t_1,\ldots,t_d)
\]
\[
=\sum_{n_i\geq 0}\dim F_d^{(n)}({\mathfrak V})T_d^n
=\sum_{n_i\geq 0}\dim F_d^{(n)}({\mathfrak V})t_1^{n_1}\cdots t_d^{n_d}
\]
be the Hilbert series of $F_d({\mathfrak V})$ as a multigraded vector space, where $F_d^{(n)}({\mathfrak V})$ is the multihomogeneous
component of $F_d({\mathfrak V})$ of degree $n=(n_1,\ldots,n_d)$.
Then
\[
H(F_d({\mathfrak V}),T_d)=\sum_{\lambda}m_{\lambda}({\mathfrak V})s_{\lambda}(T_d)=\sum_{\lambda}m_{\lambda}({\mathfrak V})s_{\lambda}(t_1,\ldots,t_d),
\]
where $s_{\lambda}(T_d)$ is the Schur function corresponding to the partition $\lambda$.

There is another group action which is important for the theory of algebras with polynomial identities.
The symmetric group $S_n$ acts on the vector space $P_n({\mathfrak V})$ of the multilinear polynomials of degree $n$ in $F_n({\mathfrak V})$ by
\[
\sigma(f(x_1,\ldots,x_n))=f(x_{\sigma(1)},\ldots,x_{\sigma(n)}),\quad \sigma\in S_n,f\in P_n({\mathfrak V}).
\]
The $S_n$-character of $P_n({\mathfrak V})$ is called the $S_n$-{\it cocharacter} of $\mathfrak V$.
It is known that the decomposition of the $n$-th cocharacter
\begin{equation}\label{Sn-decomposition for cocharacters}
\chi_n({\mathfrak V})=\sum_{\lambda\vdash n}m_{\lambda}({\mathfrak V})\chi_{\lambda},
\end{equation}
where $\chi_{\lambda}$ is the irreducible $S_n$-character indexed with the partition $\lambda$ of $n$, is determined by the Hilbert series of
$F_n({\mathfrak V})$. The multiplicities $m_{\lambda}({\mathfrak V})$ are the same for $F_n({\mathfrak V})$ in (\ref{GL-decomposition of free algebras})
and for $\chi_n({\mathfrak V})$ in (\ref{Sn-decomposition for cocharacters}).
Finally, we recall a special case of the Young rule (and of the Littlewood-Richardson rule)
for the product of two Schur functions $s_{(p)}(T_d)$ and $s_{(q)}(T_d)$
(and also for the tensor product $W(p)\otimes W(q)$ of the $\text{\rm GL}_d$-modules $W(p)$ and $W(q)$). We assume that $p\geq q$. The case $p<q$
is similar.
\begin{equation}\label{Young rule}
\begin{split}
s_{(p)}(T_d)s_{(q)}(T_d)=\sum_{k=0}^qs_{(p+q-k,k)}(T_d),\\
W(p)\otimes W(q)\cong \bigoplus_{k=0}^qW(p+q-k,k).
\end{split}
\end{equation}
We shall need also estimates for the degree of the irreducible $S_n$-characters.

\begin{lemma}\label{degree for Sn-characters}
The degree $d_{\lambda}$ of the irreducible $S_n$-character $\chi_{\lambda}$, $\lambda=(\lambda_1,\lambda_2)\vdash n$,
is a polynomial in $n$ of degree $\lambda_2$.
\end{lemma}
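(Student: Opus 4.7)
The plan is to invoke the hook length formula directly; for a two-row partition everything simplifies so much that the result falls out of a short computation. Writing $\lambda = (\lambda_1, \lambda_2)$ with $\lambda_1 + \lambda_2 = n$ and $\lambda_1 \geq \lambda_2 \geq 0$, I would first list the hooks. In the second row the hooks (read right to left) are $1, 2, \ldots, \lambda_2$. In the first row, the $\lambda_2$ cells sitting above row 2 have hooks $\lambda_1 - \lambda_2 + 2, \lambda_1 - \lambda_2 + 3, \ldots, \lambda_1 + 1$, while the remaining $\lambda_1 - \lambda_2$ cells in row 1 have hooks $1, 2, \ldots, \lambda_1 - \lambda_2$.

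Multiplying these and dividing $n!$ by the product gives the classical closed form
\[
d_\lambda = \frac{n!\,(\lambda_1 - \lambda_2 + 1)}{(\lambda_1 + 1)!\,\lambda_2!} = \binom{n}{\lambda_2} - \binom{n}{\lambda_2 - 1}.
\]
Substituting $\lambda_1 = n - \lambda_2$ converts the first expression into
\[
d_\lambda = \frac{(n - 2\lambda_2 + 1)\,n(n-1)\cdots(n - \lambda_2 + 2)}{\lambda_2!},
\]
which is a product of exactly $\lambda_2$ linear factors in $n$ divided by the constant $\lambda_2!$. This is manifestly a polynomial in $n$ of degree $\lambda_2$, with leading coefficient $1/\lambda_2!$. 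Equivalently, from the binomial form, $\binom{n}{\lambda_2}$ is a polynomial in $n$ of degree $\lambda_2$ (leading coefficient $1/\lambda_2!$) while $\binom{n}{\lambda_2 - 1}$ has degree $\lambda_2 - 1$, so their difference has degree exactly $\lambda_2$.

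There is no real obstacle here beyond correct bookkeeping of the hook lengths; the degenerate cases $\lambda_2 = 0$ (giving $d_{(n)} = 1$) and $\lambda_2 = 1$ (giving $d_{(n-1,1)} = n - 1$) serve as sanity checks for the formula.
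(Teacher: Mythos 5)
Your proof is correct and follows the same route as the paper: the hook length formula for a two-row partition, reduced to an explicit product of $\lambda_2$ linear factors in $n$ divided by $\lambda_2!$. Your closed form $d_\lambda=(n-2\lambda_2+1)\,n(n-1)\cdots(n-\lambda_2+2)/\lambda_2!$ is in fact the corrected version of the paper's displayed formula, whose descending product runs one factor too far (as written it would give $d_{(1,1)}=2$); the stated conclusion is unaffected.
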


\begin{proof} By the hook formula
\[
d_{\lambda}=\frac{n!}{\prod h_{ij}},
\]
where $h_{ij}$ is the length of the hook at the $(i,j)$-position of the Young diagram of $\lambda$.
For $\lambda=(\lambda_1,\lambda_2)\vdash n$ the lengths of the hooks of the first row are equal, reading them from right to left, to
\[
1,2,\ldots,n-2\lambda_2,n-2\lambda_2+2,\ldots,n-\lambda_2+1
\]
and those of the second row are $1,2,\ldots,\lambda_2$. Hence
\[
d_{\lambda}=\frac{n(n-1)\cdots (n-\lambda_2+1)(n-2\lambda_2+1)}{\lambda_2!},
\]
which is a polynomial of degree $\lambda_2$ in $n$.
\end{proof}

Let $\mathfrak B$ be the variety of all bicommutative algebras.
We assume that the free bicommutative algebras $F=F({\mathfrak B})$ and $F_d=F_d({\mathfrak B})$ are freely generated, respectively,
by the sets $X=\{x_1,x_2,\ldots\}$ and $X_d=\{x_1,\ldots,x_d\}$.
By \cite{DIT} the basis of the square $F^2_d$ of the algebra $F_d$ as a $K$-vector space consists of the following polynomials:
\begin{equation}\label{basis of F^2}
u_{i,j}=x_{i_1}(\cdots(x_{i_{p-1}}((\cdots((x_{i_p} x_{j_1})x_{j_2})\cdots)x_{j_q}))\cdots),
\end{equation}
where $p,q\geq 1$, $1\leq i_1\leq\cdots\leq i_{p-1}\leq i_p\leq d$, $1\leq j_1\leq j_2\leq\cdots\leq j_q\leq d$.
For any permutations $\sigma\in S_p$ and $\tau\in S_q$ the element $u_{i,j}$ from (\ref{basis of F^2}) satisfy the equality
\begin{equation}\label{action of Sm x Sn on the square}
u_{i,j}=x_{i_{\sigma(1)}}(\cdots(x_{i_{\sigma(p-1)}}((\cdots((x_{i_{\sigma(p)}} x_{j_{\tau(1)}})x_{j_{\tau(2)}})\cdots)x_{j_{\tau(q)}}))\cdots).
\end{equation}
The properties and the multiplication rules of $F_d({\mathfrak B})$ from \cite{DIT} are restated in \cite{DZ} in the following way.
We consider the polynomial algebra
\[
K[Y_d,Z_d]=K[y_1,\ldots,y_d,z_1,\ldots,z_d]
\]
in $2d$ commutative and associative variables.
We associate to each monomial $u_{i,j}$ in (\ref{basis of F^2}) the monomial
\[
\psi(u_{i,j})=y_{i_1}\cdots y_{i_{p-1}}y_{i_p} z_{j_1}z_{j_2}\cdots z_{j_q}\in K[Y_d,Z_d]
\]
and extend $\psi$ by linearity to a linear map $\psi:F_d^2\to K[Y_d,Z_d]$. The image $\psi(F_d^2)$ is spanned by all monomials
\[
Y_d^{\alpha}Z_d^{\beta}=y_1^{\alpha_1}\cdots y_d^{\alpha_d}z_1^{\beta_1}\cdots z_d^{\beta_d},\quad
\vert\alpha\vert=\sum_{i=1}^d\alpha_i>0,\vert\beta\vert=\sum_{j=1}^d\beta_j>0.
\]
Then we define an algebra $G_d$ generated by $X_d$ with basis
\[
X_d\cup\{Y_d^{\alpha}Z_d^{\beta}\mid\vert\alpha\vert,\vert\beta\vert>0\}
\]
and multiplication rules
\begin{equation}\label{multiplication in G_d}
\begin{array}{c}
x_ix_j=y_iz_j,\\
\\
x_i(Y_d^{\alpha}Z_d^{\beta})=y_iY_d^{\alpha}Z_d^{\beta},\\
\\
(Y_d^{\alpha}Z_d^{\beta})x_j=Y_d^{\alpha}Z_d^{\beta}z_j,\\
\\
(Y_d^{\alpha}Z_d^{\beta})(Y_d^{\gamma}Z_d^{\delta})=Y_d^{\alpha+\gamma}Z_d^{\beta+\delta}.\\
\end{array}
\end{equation}
The algebras $F_d$ and $G_d$ are isomorphic both as algebras and as multigraded vector spaces with isomorphism $\psi:F_d\to G_d$
which sends $x_i\in F_d$ to $x_i\in G_d$ and acts on $F_d^2$ in the same way as the linear map
$\psi:F_d^2\to K[Y_d,Z_d]$ defined above.

\section{Free bicommutative algebras}
In this section we give an alternative proof of the formula for the cocharacter sequence of $\mathfrak B$ given in \cite{DIT}
and describe the highest weight vectors of the irreducible $\text{\rm GL}_d$-submodules of $F_d=F_d({\mathfrak B})$.
The action of $\text{\rm GL}_d$ on the $d$-dimensional vector space $KX_d$ induces a similar action on $KY_d$ and $KZ_d$ which is extended diagonally
on the polynomial algebras $K[Y_d]$ and $K[Z_d]$ and on the square $G_d^2$ of the algebra $G_d$.
In the sequel we equip $K[Y_d]$, $K[Z_d]$, and $G_d^2$ with this action of $\text{\rm GL}_d$.

\begin{lemma}\label{the graded structure of square of F}
As a multigraded vector space the square $F_d^2$ of the free bicommutative algebra $F_d$ is isomorphic to the tensor product
$\omega(K[Y_d])\otimes\omega(K[Z_d])$, where $\omega$ is the augmentation ideal of the polynomial algebra, i.e., the ideal of polynomials
without constant term. As a $\text{\rm GL}_d$-module $F_d^2$ is isomorphic to the direct sum of tensor products
\begin{equation}\label{GL-structure of the square of F}
\bigoplus_{p,q\geq 1}W(p)\otimes W(q).
\end{equation}
\end{lemma}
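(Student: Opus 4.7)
My plan is to build everything on the explicit isomorphism $\psi: F_d \to G_d$ already described in the preliminaries.

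First, I would work out the vector-space statement. By the definition of $\psi$, the restriction $\psi|_{F_d^2}$ sends the basis $\{u_{i,j}\}$ from (\ref{basis of F^2}) bijectively to the set of monomials $Y_d^{\alpha}Z_d^{\beta}$ with $|\alpha|, |\beta| > 0$. Since these monomials form a basis of $\omega(K[Y_d])\otimes\omega(K[Z_d])$ (regarding $K[Y_d]$ and $K[Z_d]$ as polynomial algebras on disjoint sets of variables so their tensor product is $K[Y_d,Z_d]$), $\psi$ restricts to a linear isomorphism $F_d^2 \xrightarrow{\sim} \omega(K[Y_d])\otimes\omega(K[Z_d])$. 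The multigrading compatibility is immediate: the $x_k$-degree of $u_{i,j}$ equals the number of occurrences of the index $k$ among the $i_\bullet$'s and $j_\bullet$'s, which is exactly the combined $y_k$- and $z_k$-degree of $\psi(u_{i,j})$, i.e., the degree in the $k$-th copy of $KX_d$ after identification.

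Next, I would promote this to a $GL_d$-equivariant statement. Since $\psi: F_d \to G_d$ is an algebra isomorphism with $\psi(x_i)=x_i$, transporting the canonical $GL_d$-action across $\psi$ gives an action on $G_d$. From the multiplication rule $x_ix_j = y_iz_j$ in (\ref{multiplication in G_d}), a substitution $x_i \mapsto \sum_k g_{ki}x_k$ forces
\[
y_iz_j \;\mapsto\; \sum_{k,l} g_{ki}g_{lj}\, y_kz_l,
\]
and inductively, using the other relations in (\ref{multiplication in G_d}), each $y_i$ and each $z_j$ must transform exactly as $x_i$ does. Thus $KY_d$ and $KZ_d$ are each isomorphic to the natural $GL_d$-module $KX_d$, and $K[Y_d]\otimes K[Z_d]$ carries the diagonal $GL_d$-action on each tensor factor.

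Finally, the classical decomposition of the symmetric algebra gives $K[Y_d] = \bigoplus_{p\geq 0} W(p)$ and $K[Z_d] = \bigoplus_{q\geq 0} W(q)$ as $GL_d$-modules (the $p$-th symmetric power of the natural module is $W(p)$). Removing the constant terms, $\omega(K[Y_d]) = \bigoplus_{p\geq 1} W(p)$ and $\omega(K[Z_d]) = \bigoplus_{q\geq 1} W(q)$, so
\[
F_d^2 \;\cong\; \omega(K[Y_d])\otimes\omega(K[Z_d]) \;\cong\; \bigoplus_{p,q\geq 1} W(p)\otimes W(q),
\]
which is (\ref{GL-structure of the square of F}).

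The main thing that needs a little care is the $GL_d$-equivariance step: the multiplication in $G_d$ is not the symmetric-algebra multiplication, so one has to argue that the $GL_d$-action transported via $\psi$ is still just the diagonal linear action on the generators $y_i, z_j$. Everything else is bookkeeping with the bases and the standard fact about the symmetric algebra.
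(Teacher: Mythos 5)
Your proof is correct and follows essentially the same route as the paper: identify $F_d^2$ via $\psi$ with the span of the monomials $Y_d^{\alpha}Z_d^{\beta}$, $|\alpha|,|\beta|>0$, i.e., with $\omega(K[Y_d])\otimes\omega(K[Z_d])$, and then invoke $K[Y_d]^{(p)}\cong W(p)$, $K[Z_d]^{(q)}\cong W(q)$. The only difference is that you spell out the $GL_d$-equivariance of this identification (which the paper leaves implicit), and that verification is carried out correctly.
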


\begin{proof}
We identify the monomial $Y_d^{\alpha}Z_d^{\beta}\in K[Y_d,Z_d]$ with $Y_d^{\alpha}\otimes Z_d^{\beta}\in K[Y_d]\otimes K[Z_d]$.
Then the first part of the lemma is simply a restatement of the fact that the image of $F_d^2$ under the action of $\psi$ has a basis
$\{Y_d^{\alpha}Z_d^{\beta}\mid\vert\alpha\vert,\vert\beta\vert>0\}$.  The second part of the lemma holds because
the $\text{\rm GL}_d$-module $K[Y_d]^{(p)}$ of the homogeneous polynomials of degree $p$ in $K[Y_d]$ is isomorphic to $W(p)$ and similarly for $K[Z_d]^{(q)}$.
\end{proof}

\begin{proposition}\label{the n-th cocharacter of B}\cite{DIT}
The cocharacter sequence of the variety $\mathfrak B$ of all bicommutative algebras is
\[
\chi_n({\mathfrak B})=\sum_{(\lambda_1,\lambda_2)\vdash n}m_{(\lambda_1,\lambda_2)}({\mathfrak B})\chi_{(\lambda_1,\lambda_2)},
\]
where
\begin{equation}\label{multiplicities of cocharacters of B}
\begin{split}
m_{(1)}({\mathfrak B})=1,\\
m_{(n)}({\mathfrak B})=n-1,\quad n>1,\\
m_{(\lambda_1,\lambda_2)}({\mathfrak B})=n-2\lambda_2+1,\quad \lambda_2>0.
\end{split}
\end{equation}
\end{proposition}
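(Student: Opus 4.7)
The plan is to combine Lemma 3.1 with the Young rule (\ref{Young rule}) and then invoke the standard bridge between $GL_d$-multiplicities in $F_d$ and $S_n$-multiplicities in the cocharacter. By Lemma \ref{the graded structure of square of F} and the fact that $F_d = KX_d \oplus F_d^2$, we have the $GL_d$-decomposition
\[
F_d \;\cong\; W(1) \;\oplus\; \bigoplus_{p,q\geq 1} W(p)\otimes W(q).
\]
Applying (\ref{Young rule}) to each tensor product gives
\[
F_d \;\cong\; W(1) \;\oplus\; \bigoplus_{p,q\geq 1}\;\bigoplus_{k=0}^{\min(p,q)} W(p+q-k,\,k),
\]
so $m_{(\lambda_1,\lambda_2)}(\mathfrak{B})$ is simply the number of pairs $(p,q)$ with $p,q\geq 1$, $p+q=\lambda_1+\lambda_2=n$, $\min(p,q)\geq \lambda_2$, and with $k=\lambda_2$.

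Next I would split into three cases on $\lambda=(\lambda_1,\lambda_2)\vdash n$ and read off the count. For $n=1$ only the summand $W(1)$ contributes, giving $m_{(1)}(\mathfrak{B})=1$. For $\lambda=(n)$ with $n\geq 2$, we take $k=0$ and count pairs $(p,q)$ with $p,q\geq 1$ and $p+q=n$, yielding $n-1$ pairs. For $\lambda=(\lambda_1,\lambda_2)$ with $\lambda_2\geq 1$, the constraint $\min(p,q)\geq \lambda_2$ together with $p+q=n$ forces $p=\lambda_2+a$, $q=\lambda_2+b$ with $a,b\geq 0$ and $a+b=n-2\lambda_2$; the number of such pairs is exactly $n-2\lambda_2+1$.

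Finally, to pass from $GL_d$-multiplicities to the $S_n$-cocharacter I would invoke the Berele--Drensky correspondence recalled in the preliminaries: the multiplicity $m_\lambda(\mathfrak{B})$ of $W(\lambda)$ in $F_d(\mathfrak{B})$ (for any $d\geq \lambda'_1$) coincides with the multiplicity of $\chi_\lambda$ in $\chi_n(\mathfrak{B})$, as displayed in (\ref{GL-decomposition of free algebras}) and (\ref{Sn-decomposition for cocharacters}). Since all partitions appearing have at most two parts, it suffices to work with $d=2$, and every partition $(\lambda_1,\lambda_2)\vdash n$ does arise in the decomposition above with the claimed multiplicity.

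There is essentially no obstacle beyond careful bookkeeping; the only place to be mindful is the boundary cases $\lambda_2=0$ and $n=1$, where the linear summand $W(1)$ must be accounted for separately since $F_d^2$ contributes only in degrees $\geq 2$.
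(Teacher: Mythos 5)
Your proof is correct and follows essentially the same route as the paper: decompose $F_d^2$ via Lemma \ref{the graded structure of square of F} as $\bigoplus_{p,q\geq 1}W(p)\otimes W(q)$, apply the Young rule (\ref{Young rule}) to count the tensor products containing $W(\lambda_1,\lambda_2)$, and transfer the $GL_d$-multiplicities to the $S_n$-cocharacter by the Berele--Drensky correspondence. The bookkeeping (including the separate treatment of the linear part $KX_d$ and the count $n-2\lambda_2+1$ for $\lambda_2>0$) matches the paper's argument exactly.
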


\begin{proof}
The multiplicities of the irreducible $S_n$-characters in the cocharacter sequence (\ref{Sn-decomposition for cocharacters})
and of the irreducible $\text{\rm GL}_d$-modules of the homogeneous component $F_d^{(n)}$ of degree $n$ of the free algebra $F_d$ in (\ref{GL-decomposition of free algebras})
are the same for $d\geq n$. Hence we may work in $F_d$ instead of with $\chi_n({\mathfrak B})$. Since the case $n=1$ is trivial, we shall assume that $n>1$.
By Lemma \ref{the graded structure of square of F} and the Young rule (\ref{Young rule}) we derive that the only nontrivial multiplicities
$m_{\lambda}({\mathfrak B})$ are for $\lambda=(\lambda_1,\lambda_2)$. Then $m_{\lambda}({\mathfrak B})$ is equal to the number of tensor products
$W(p)\otimes W(q)$ which contain an isomorphic copy of $W(\lambda)$ as a submodule. For $\lambda=(n)$ there are $n-1$ possibilities
\[
W(1)\otimes W(n-1),W(2)\otimes W(n-2),\ldots,W(n-1)\otimes W(1),
\]
i.e., $m_{(n)}({\mathfrak B})=n-1$. For $\lambda=(\lambda_1,\lambda_2)$ with $\lambda_2>0$ the possibilities are
\[
W(\lambda_2)\otimes W(n-\lambda_2),W(\lambda_2+1)\otimes W(n-\lambda_2-1),\ldots,W(n-\lambda_2)\otimes W(\lambda_2),
\]
which gives $m_{(\lambda_1,\lambda_2)}({\mathfrak B})=n-2\lambda_2+1$.
\end{proof}

\begin{lemma}\label{hwv of G}
The following polynomials $w_{\lambda}^{(k)}$ form a maximal linearly independent system of highest weight vectors of the $\text{\rm GL}_d$-submodules $W(\lambda)$ in $G_d^2$:
\begin{equation}\label{hwv of W(lambda)}
\begin{split}
w_{(n)}^{(j)}=y_1^jz_1^{n-j},&\quad j=1,2,\ldots,n-1,\\
w_{\lambda}^{(j)}=y_1^j(y_1z_2-y_2z_1)^{\lambda_2}z_1^{\lambda_1-\lambda_2-j}, &\quad j=0,1,\ldots,\lambda_1-\lambda_2,\text{ if }\lambda_2>0.
\end{split}
\end{equation}
\end{lemma}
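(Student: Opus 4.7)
The plan is to verify three things that together yield the lemma: (i) each listed polynomial $w_\lambda^{(j)}$ is a highest weight vector in $G_d^2$ of weight $\lambda$; (ii) the polynomials in the list for fixed $\lambda$ are linearly independent; and (iii) the number of polynomials listed for each $\lambda$ equals the multiplicity $m_\lambda(\mathfrak B)$ computed in Proposition \ref{the n-th cocharacter of B}. Any linearly independent family of highest weight vectors of size equal to the multiplicity is automatically maximal, so (i)--(iii) suffice.

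The real content is (i), and I would use Lemma \ref{criterion for hwv}. The isomorphism $\psi : F_d^2 \to \psi(F_d^2)\subset K[Y_d,Z_d]$ is $GL_d$-equivariant for the diagonal action on $K[Y_d]\otimes K[Z_d]$, so the derivation $\Delta_{x_j\to x_i}$ transports to the derivation of $K[Y_d,Z_d]$ sending $y_j\mapsto y_i$, $z_j\mapsto z_i$ and annihilating the remaining generators. For $w_{(n)}^{(j)}=y_1^jz_1^{n-j}$ the criterion holds trivially, since only $y_1$ and $z_1$ occur. In the two-row case the only operator that could act nontrivially is $\Delta_{x_2\to x_1}$, and the whole point of the formula is the identity
\[
\Delta_{x_2\to x_1}(y_1z_2-y_2z_1)=y_1z_1-y_1z_1=0,
\]
after which the Leibniz rule and the fact that the remaining factors $y_1^j$ and $z_1^{\lambda_1-\lambda_2-j}$ involve neither $y_2$ nor $z_2$ close the case. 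This single computation is the only place that requires insight; the rest is bookkeeping.

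Steps (ii) and (iii) are short. The weight assertion is obtained by summing the $y_i$- and $z_i$-degrees in each factor, which in both cases reproduces $\lambda=(\lambda_1,\lambda_2)$. Linear independence for fixed $\lambda$ follows because the $w_\lambda^{(j)}$ arise from a common nonzero factor $(y_1z_2-y_2z_1)^{\lambda_2}$ (or $1$ when $\lambda_2=0$) multiplied by the distinct monomials $y_1^jz_1^{\lambda_1-\lambda_2-j}$ in the integral domain $K[Y_d,Z_d]$. Finally, the cardinalities of the two lists are $n-1$ and $\lambda_1-\lambda_2+1=n-2\lambda_2+1$, which match the multiplicities $m_{(n)}(\mathfrak B)=n-1$ and $m_{(\lambda_1,\lambda_2)}(\mathfrak B)=n-2\lambda_2+1$ given by Proposition \ref{the n-th cocharacter of B}, establishing maximality.
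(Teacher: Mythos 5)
Your proposal is correct and follows essentially the same route as the paper: verify the highest weight property via the transported derivation $\Delta_{y_2\to y_1,z_2\to z_1}$ (with the key computation $\Delta(y_1z_2-y_2z_1)=0$), check linear independence for fixed $\lambda$, and match the count against the multiplicities from Proposition \ref{the n-th cocharacter of B}. The only cosmetic difference is that the paper argues linear independence from the pairwise distinct $y_1$-degrees rather than from the integral-domain factorization, which is an equivalent observation.
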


\begin{proof}
For a fixed $\lambda$ the elements (\ref{hwv of W(lambda)}) are linearly independent because are nonzero and of pairwise different degree in $y_1$.
They are of degree $\lambda_1$ with respect to $y_1,z_1$ and of degree $\lambda_2$ with respect to $y_2,z_2$.
By Proposition \ref{the n-th cocharacter of B} the multiplicities of $W(n)$ and
$W(\lambda)$, $\lambda=(\lambda_1,\lambda_2)\vdash n$, in $G_d^2$
are, respectively,
\[
m_{(n)}({\mathfrak B})=n-1\text{ and }m_{(\lambda_1,\lambda_2)}({\mathfrak B})=n-2\lambda_2+1=\lambda_1-\lambda_2+1.
\]
Hence their number coincides with the number of polynomials in (\ref{hwv of W(lambda)}).
Now, it is sufficient to show that all $w_{\lambda}^{(j)}$ are highest weight vectors. Applying Lemma \ref{criterion for hwv},
this is obvious for $w_{(n)}^{(j)}$. Let $\lambda_2>0$. The analogue $\Delta_{y_2\to y_1,z_2\to z_1}$ of the derivation $\Delta_{x_2\to x_1}$ acting on $K[Y_d,Z_d]$
sends $y_1,z_1$ to 0 and $y_2,z_2$ to $y_1,z_1$, respectively.
Obviously
\[
\Delta_{y_2\to y_1,z_2\to z_1}(w_{\lambda}^{(j)})
=\lambda_2y_1^j(y_1z_2-y_2z_1)^{\lambda_2-1}z_1^{\lambda_1-\lambda_2-j}\Delta_{y_2\to y_1,z_2\to z_1}(y_1z_2-y_2z_1)=0
\]
and all $w_{\lambda}^{(j)}$ are highest weight vectors.
\end{proof}

\section{Subvarieties}

In this section we assume that $\mathfrak V$ is a proper subvariety of $\mathfrak B$ and $\mathfrak V$ satisfies a nontrivial polynomial identity
$f=0$ of degree $k$, where $0\not=f(X_d)\in F=F({\mathfrak B})$. Since the case $k=1$ is trivial we shall assume that $k\geq 2$.
In the sequel we shall work mainly in the isomorphic copies $G$ and $G_d$ of the algebras $F$ and $F_d$ instead of in $F$ and $F_d$.
Identifying $F$ and $F_d$ with their isomorphic copies, we shall denote the corresponding elements with the same symbols.
In particular, if $f(X_d)\in F_d^2$ we shall write $f(Y_d,Z_d)\in G_d^2$ and vise versa.
Since the $\text{\rm GL}_d$-module generated by $f$ contains an irreducible submodule $W(\lambda)$,
there exists a highest weight vector $w_{\lambda}$ such that the polynomial identity $w_{\lambda}=0$ follows from the polynomial identity $f=0$.
Hence we may assume that $\mathfrak V$ satisfies some polynomial identity $w_{\lambda}(x_1,x_2)=0$ for $\lambda\vdash k$.
Then $w_{\lambda}(Y_2,Z_2)\in G_2$ is a linear combination of the highest weight vectors in (\ref{hwv of W(lambda)}) and for some $\xi_j\in K$
\begin{equation}\label{hwv for subvariety}
\begin{split}
w_{(k)}=\sum_{j=1}^{k-1}\xi_jy_1^jz_1^{k-j},&\text{ for } \lambda=(k),\\
w_{(k)}=(y_1z_2-y_2z_1)^{\lambda_2}\sum_{j=0}^{\lambda_1-\lambda_2}\xi_jy_1^jz_1^{\lambda_1-\lambda_2-j},
&\text{ for } \lambda=(\lambda_1,\lambda_2),\lambda_2>0.
\end{split}
\end{equation}
If $f(X_d)\in F_d$ is multihomogeneous then its partial linearization
$\text{lin}_{x_i}f(X_d)$ in $x_i$ is the component of degree $\deg_{x_i}-1$ with respect to $x_i$ of the polynomial
$f(x_1,\ldots,x_i+x_{d+1},\ldots,x_d)\in F_{d+1}$. If $\Delta_{x_i\to x_{d+1}}$ is the derivation of $F_{d+1}$
which sends $x_i$ to $x_{d+1}$ and the other $x_j$ to 0,
then
\[
\text{lin}_{x_i}f(X_d)=(\text{lin}_{x_i}f)(X_{d+1})=\Delta_{x_i\to x_{d+1}}(f(X_d)).
\]
If $u\in F$ then $(\text{lin}_{x_i}f)(x_1,\ldots,x_d,u)$ can be expressed in terms of derivations as
\[
(\text{lin}_{x_i}f)(x_1,\ldots,x_d,u)=\Delta_{x_i\to u}(f(X_d)),
\]
where $\Delta_{x_i\to u}$ is the derivation of $F$ sending $x_i$ to $u$ and all other generators $x_j$ to 0.
The action of the analogue of $\Delta_{x_i\to x_{d+1}}$ on $G^2$ is clear: It sends $y_i$ and $z_i$, respectively, to $y_{d+1}$ and $z_{d+1}$ and all other
variables $y_j$ and $z_j$ to 0. We denote this derivation by $\Delta_{y_i\to y_{d+1},z_i\to z_{d+1}}$.
Now we shall translate the action of $\delta_{x_i\to u}$ on $F^2$, $u\in F^2$,
in the language of $G$ and the usual partial derivatives.

\begin{lemma}\label{action of delta}
Let $u\in K[Y,Z]$ be in the image $G^2\subset K[Y,Z]$ of $F^2$.
Let $\Delta_{y_i,z_i\to u}$ be the derivation of $K[Y,Z]$ which sends the variables $y_i,z_i$ to $u$ and the other variables to $0$.
If $f(X_d)\in F^2$ is multihomogeneous, then the image of
$(\text{lin}_{x_i}f)(x_1,\ldots,x_d,u)$ in $G^2$ is
\[
\Delta_{y_i,z_i\to u}(f)=\left(\frac{\partial f}{\partial y_i}+\frac{\partial f}{\partial z_i}\right)u.
\]
\end{lemma}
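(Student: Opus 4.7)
The plan is to verify the formula first on basis monomials of $F^2$ and then extend by linearity.

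Both sides of the proposed equality are linear in $f$ and in $u$, so I may assume $f=u_{i,j}$ is a basis monomial of the form (\ref{basis of F^2}), whose image in $G^2$ is $\psi(u_{i,j}) = y_{i_1}\cdots y_{i_p} z_{j_1}\cdots z_{j_q}$, and that $u = Y^\gamma Z^\delta$ is a single monomial with $|\gamma|,|\delta|\geq 1$. Since $\Delta_{x_i\to u}$ acts as a derivation, $\Delta_{x_i\to u}(u_{i,j})$ is the sum, over each occurrence of the letter $x_i$ among the indices $i_1,\ldots,i_p,j_1,\ldots,j_q$, of the element of $F^2$ obtained by replacing that single occurrence of $x_i$ by $u$.

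Next I would compute one such term in $G^2$ by unpacking the parenthesization of $u_{i,j}$ from the inside out and applying (\ref{multiplication in G_d}). The key observation is that for any $v\in G^2$ one has $x_k v = y_k v$ and $v x_k = v z_k$, while the product of two elements of $G^2$ simply adds the $Y$- and $Z$-exponents. A straightforward induction then shows that substituting $u$ for $x_{i_s}$ in a left position ($1\leq s\leq p$, including the core case $s=p$) yields $y_{i_1}\cdots \widehat{y_{i_s}}\cdots y_{i_p} z_{j_1}\cdots z_{j_q}\cdot u$, while substituting $u$ for $x_{j_t}$ in a right position (including $t=1$) yields $y_{i_1}\cdots y_{i_p} z_{j_1}\cdots \widehat{z_{j_t}}\cdots z_{j_q}\cdot u$. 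The only mild subtlety occurs at the innermost factor $(x_{i_p}x_{j_1})$ where the left- and right-multiplication patterns meet; but replacing either letter by $u$ and invoking $u x_{j_1} = u z_{j_1}$ or $x_{i_p} u = y_{i_p} u$ from (\ref{multiplication in G_d}) reduces immediately to the generic computation.

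Summing over all occurrences completes the proof: the contributions from positions with $i_s=i$ assemble into $(\partial \psi(u_{i,j})/\partial y_i)\cdot u$ and those with $j_t=i$ assemble into $(\partial \psi(u_{i,j})/\partial z_i)\cdot u$, because in a commutative polynomial ring the partial derivative of a monomial with respect to $y_i$ is precisely the sum over the positions of $y_i$ of the monomial with that one $y_i$ deleted. Factoring out $u$ yields $\Delta_{y_i,z_i\to u}(\psi(u_{i,j}))$, as desired. The only place where care is needed is the bookkeeping at the core position of $u_{i,j}$; once (\ref{multiplication in G_d}) is applied there, everything else is routine monomial manipulation.
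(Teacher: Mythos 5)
Your proof is correct and follows essentially the same route as the paper's: reduce by linearity to basis monomials, interpret the derivation as a sum over occurrences of $x_i$ of the word with that occurrence replaced by $u$, evaluate each such word in $G$ via the multiplication rules (\ref{multiplication in G_d}), and match the result with $\bigl(\partial/\partial y_i+\partial/\partial z_i\bigr)$ applied to the image monomial. The only cosmetic difference is that the paper invokes the $S_p\times S_q$-invariance (\ref{action of Sm x Sn on the square}) to move the substituted occurrence to the core position $(x_{i_p}x_{j_1})$, thereby avoiding the position-by-position verification that you carry out directly.
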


\begin{proof}
It is sufficient to consider the case when $f$ and $u$ are monomials and $i=1$:
\[
f=(y_1^{\alpha_1}z_1^{\beta_1})v_1v_2, v_1=y_2^{\alpha_2}\cdots y_d^{\alpha_d},v_2=z_2^{\beta_2}\cdots z_d^{\beta_d},
\]
\[
\alpha_1+\beta_1\geq 1,\vert\alpha\vert=\alpha_1+\alpha_2+\cdots+\alpha_d>0,\vert\beta\vert=\beta_1+\beta_2+\cdots+\beta_d>0,
\]
\[
u=Y_d^{\gamma}Z_d^{\delta},\vert\gamma\vert>0,\vert\delta\vert>0.
\]
Then
\[
\Delta_{y_1\to y_{d+1},z_1\to z_{d+1}}(f)=(\alpha_1y_1^{\alpha_1-1}y_{d+1}z_1^{\beta_1}+\beta_1y_1^{\alpha_1}z_1^{\beta_1-1}z_{d+1})v_1v_2
\]
\[
=\frac{\partial f}{\partial y_1}y_{d+1}+\frac{\partial f}{\partial z_1}z_{d+1},
\]
In virtue of (\ref{action of Sm x Sn on the square}) we may assume that the preimage
$\displaystyle \psi^{-1}\left(\frac{\partial f}{\partial y_1}y_{d+1}\right)$ in $F_{d+1}^2$ is of the form
$\alpha_1(\cdots(x_{d+1}x_{j_1})\cdots)$, where the dots before and after $(x_{d+1}x_{j_1})$ correspond to the beginning and the end of the element in the form
(\ref{basis of F^2}). Since $u\in F^2$ we obtain that
\[
\psi(\alpha_1(\cdots(ux_{j_1})\cdots))=\alpha_1(\cdots (Y_d^{\gamma}Z_d^{\delta}z_{j_1})\cdots)=\frac{\partial f}{\partial y_1}u.
\]
Similarly
\[
\psi(\beta_1(\cdots(x_{i_p}u)\cdots))=\beta_1(\cdots (y_{i_p}Y_d^{\gamma}Z_d^{\delta})\cdots)=\frac{\partial f}{\partial z_1}u.
\]
\end{proof}

\begin{lemma}\label{consequences of w(lambda)}
If $0\not=f\in W(\lambda_1,\lambda_2)\subset F({\mathfrak B})$, then all polynomial identities $w_{(\mu_1,\mu_2)}^{(j)}=0$ with $\mu_2\geq \lambda_1$
are consequences of the polynomial identity $f=0$.
\end{lemma}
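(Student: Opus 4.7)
\textit{Proof plan.}
We work in the polynomial model $G$, where by (\ref{multiplication in G_d}) the product of two elements of $F^2$ corresponds to ordinary polynomial multiplication in $K[Y,Z]$ restricted to $G^2 = \omega(K[Y]) \otimes \omega(K[Z])$; in particular, left multiplication by $x_i$ appends a factor $y_i$ and right multiplication by $x_j$ appends a factor $z_j$. Since $f \neq 0$ lies in the irreducible $GL_d$-module $W(\lambda_1, \lambda_2)$, the $GL_d$-module it generates is all of $W(\lambda_1, \lambda_2)$. Because $T$-ideals are $GL_d$-invariant (closed under linear substitutions), the $T$-ideal of $f$ contains the highest weight vector of this copy of $W(\lambda_1, \lambda_2)$, which by Lemma \ref{hwv of G} is (up to scalar) one of the $w_\lambda^{(j_0)}$ from (\ref{hwv of W(lambda)}). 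Hence we may assume $f = w_\lambda^{(j_0)}$.

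The key reduction is: it suffices to show that $w_{(\lambda_1,\lambda_1)}^{(0)} = (y_1 z_2 - y_2 z_1)^{\lambda_1} = 0$ is a consequence of $f = 0$. Indeed, granted this, for every $\mu = (\mu_1, \mu_2)$ with $\mu_2 \geq \lambda_1$ and every admissible $j$, we multiply in $G^2$ by $(y_1 z_2 - y_2 z_1)^{\mu_2 - \lambda_1} \in G^2$ to produce the identity $(y_1 z_2 - y_2 z_1)^{\mu_2} = 0$ (this step is trivial when $\mu_2 = \lambda_1$). Successive left multiplications by $x_1$ (appending $y_1$'s) and right multiplications by $x_1$ (appending $z_1$'s) then yield
\[
y_1^j (y_1 z_2 - y_2 z_1)^{\mu_2} z_1^{\mu_1 - \mu_2 - j} = w_\mu^{(j)}.
\]

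To derive $(y_1 z_2 - y_2 z_1)^{\lambda_1}$ from $f$, we substitute $x_1 \to x_1 + \alpha x_2$ with a scalar parameter $\alpha$, obtaining a polynomial in $\alpha$ whose coefficients are identities in $\mathfrak V$. When $\lambda_2 = 0$, so that $f = y_1^{j_0} z_1^{k-j_0}$ is a single monomial, this yields the family of monomial identities
\[
y_1^{j_0 - a} y_2^a z_1^{k - j_0 - b} z_2^b = 0, \qquad 0 \leq a \leq j_0,\ 0 \leq b \leq k - j_0,
\]
and a direct calculation shows that each monomial $y_1^{\lambda_1 - l} y_2^l z_1^l z_2^{\lambda_1 - l}$ in the expansion of $(y_1 z_2 - y_2 z_1)^{\lambda_1}$ is a polynomial multiple in $G^2$ of one such derived identity for a suitable choice of $a, b$ depending on $l$. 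In the case $\lambda_2 > 0$, we first multiply $f$ by $(y_1 z_2 - y_2 z_1)^{\lambda_1 - \lambda_2} \in G^2$ and apply further scalar-parametrized substitutions to isolate the required monomial-like components, reducing matters to the monomial combinatorics above.

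The main obstacle is the bookkeeping in the final step, especially when $\lambda_2 > 0$ and $f$ is not a single monomial: one must verify that the family of consequences of $f = 0$ produced by the above substitutions and products is rich enough to express $(y_1 z_2 - y_2 z_1)^{\lambda_1}$. The hypothesis $\mu_2 \geq \lambda_1$ in the conclusion is exactly what ensures the multiplier $(y_1 z_2 - y_2 z_1)^{\mu_2 - \lambda_1}$ in the key reduction lies in $G^2$.
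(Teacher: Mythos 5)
Your closing step is sound and coincides with the paper's: once $(y_1z_2-y_2z_1)^{\lambda_1}=0$ is known to be a consequence of $f=0$, every $w_{(\mu_1,\mu_2)}^{(j)}$ with $\mu_2\geq\lambda_1$ follows by multiplying by $(y_1z_2-y_2z_1)^{\mu_2-\lambda_1}\in G^2$ (when $\mu_2>\lambda_1$) and appending $y_1$'s and $z_1$'s via left and right multiplication by $x_1$. The problem is that the part you label ``the main obstacle'' --- actually deriving $(y_1z_2-y_2z_1)^{\lambda_1}=0$ from $f=0$ --- is the entire content of the lemma, and the route you sketch for it does not work.

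There are two concrete gaps. First, you may not assume $f=w_\lambda^{(j_0)}$: the polynomials in (\ref{hwv of W(lambda)}) form a \emph{basis} of the space of highest weight vectors of weight $\lambda$ in $G^2$, so the highest weight vector of the particular irreducible copy $W(\lambda_1,\lambda_2)$ containing $f$ is an arbitrary nonzero combination $\sum_j\xi_jw_\lambda^{(j)}$ --- this is exactly the form (\ref{hwv for subvariety}) the paper is forced to work with --- and the vanishing of such a combination does not entail the vanishing of any single $w_\lambda^{(j_0)}$. Second, and more seriously, the substitution $x_1\to x_1+\alpha x_2$ replaces $y_1$ by $y_1+\alpha y_2$ and $z_1$ by $z_1+\alpha z_2$ \emph{simultaneously}, so the coefficient of $\alpha^c$ in $(y_1+\alpha y_2)^{j_0}(z_1+\alpha z_2)^{k-j_0}$ is $\sum_{a+b=c}\binom{j_0}{a}\binom{k-j_0}{b}y_1^{j_0-a}y_2^a z_1^{k-j_0-b}z_2^b$, not a single monomial, and the individual monomials are in general \emph{not} consequences of $f=0$. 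Take $f=y_1z_1$, i.e., the identity $x^2=0$ with $\lambda=(2)$, $j_0=1$: the only consequence of degree $(1,1)$ is $y_1z_2+y_2z_1=0$, and $y_1z_2=0$ is not a consequence (the anticommutative bicommutative algebra with basis $e_1,e_2,c$ and $e_1e_2=-e_2e_1=c$, all other products zero, satisfies $x^2=0$ but not $x_1x_2=0$). Yet your plan obtains the monomial $y_1^2z_2^2$ of $(y_1z_2-y_2z_1)^{2}$ as a multiple of the unavailable identity $y_1z_2=0$. Note that $(y_1z_2-y_2z_1)^2=0$ \emph{is} a consequence of $y_1z_1=0$, but only as the full alternating sum, never monomial by monomial; the paper produces the factors $(y_1z_2-y_2z_1)$ one at a time through the combinations $\Delta_{y_1\to y_2,z_1\to z_2}(w)z_1-(\lambda_1-\lambda_2)wz_2$ of a partial linearization with the original identity (and a symmetric combination to strip the remaining powers of $z_1$), tracking the least index $p$ with $\xi_p\neq 0$. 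That inductive mechanism is what is missing from your plan, both for $\lambda_2=0$ and for the case $\lambda_2>0$, which you defer entirely.
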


\begin{proof}
As commented in the beginning of the section, we may assume that $f=w_{\lambda}$ is a highest weight vector in $W(\lambda_1,\lambda_2)\subset F_2$.
Hence, working in $G_2$ instead of in $F_2$, $w_{\lambda}$ has the form (\ref{hwv for subvariety}), i.e.,
\[
w_{\lambda}=\sum_{j\geq p}\xi_j w_{\lambda}^{(j)}=(y_1z_2-y_2z_1)^{\lambda_2}\sum_{j\geq p}\xi_jy_1^jz_1^{\lambda_1-\lambda_2-j},\xi_p\not=0.
\]
First, let $p>0$, i.e., $w_{\lambda}$ is divisible by $y_1^p$.
The partial linearizations of the identity $w_{\lambda}=0$ are its consequences. Hence $\Delta_{y_1\to y_2,z_1\to z_2}(w_{\lambda})=0$ which has the form
\[
(y_1z_2-y_2z_1)^{\lambda_2}\sum_{j\geq p}\xi_j(jy_1^{j-1}y_2z_1^{\lambda_1-\lambda_2-j}+(\lambda_1-\lambda_2-j)y_1^jz_1^{\lambda_1-\lambda_2-j-1}z_2)=0
\]
is also a consequence of $w_{\lambda}=0$ and the same holds for
\[
w_{(\lambda_1,\lambda_2+1)}=\Delta_{y_1\to y_2,z_1\to z_2}(w_{\lambda})z_1-(\lambda_1-\lambda_2)w_{\lambda}z_2
\]
\[
=-(y_1z_2-y_2z_1)^{\lambda_2+1}\sum_{j\geq p-1}(j+1)\xi_{j+1}y_1^jz_1^{\lambda_1-\lambda_2-j-1}=0.
\]
We obtained that $w_{(\lambda_1,\lambda_2+1)}=0$ is a consequence of $w_{(\lambda_1,\lambda_2)}=0$.
It is divisible by $y_1^{p-1}$ but is not divisible by $y_1^p$.
Continuing in this way we shall reach a consequence
\[
w_{(\lambda_1,\lambda_2+p)}=(y_1z_2-y_2z_1)^{\lambda_2+p}j!\xi_pz_1^{\lambda_1-\lambda_2-p}=0.
\]
Now the consequence
\[
y_1\Delta_{y_1\to y_2,z_1\to z_2}(w_{(\lambda_1,\lambda_2+p)})-y_2(\lambda_1-\lambda_2+p)w_{(\lambda_1,\lambda_2+p)}=0
\]
is of the form $w_{(\lambda_1,\lambda_2+p+1)}=0$ and is divisible by $z_1^{\lambda_1-\lambda_2-p-1}$ only. Continuing the process we shall obtain
as a consequence
\[
w_{(\lambda_1,\lambda_1)}=(y_1z_2-y_2z_1)^{\lambda_1}=w_{(\lambda_1,\lambda_1)}^{(0)}.
\]
Since all $w_{(\mu_1,\mu_2)}^{(j)}$ with $\mu_2\geq\lambda_1$ are divisible by $w_{(\lambda_1,\lambda_1)}^{(0)}$
and hence are its consequences, we complete the proof.
\end{proof}

\begin{corollary}\label{consequences of identity of degree k}
If $0\not=f\in F$ is of degree $k$ then all identities $w_{(\mu_1,\mu_2)}^{(j)}=0$ with $\mu_2\geq k$ follow from the identity $f=0$.
\end{corollary}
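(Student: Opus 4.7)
The plan is to deduce this corollary directly from Lemma \ref{consequences of w(lambda)} by locating a suitable highest weight vector inside the $GL_d$-submodule generated by $f$.

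First I would reduce $f$ to a multihomogeneous element. Since the base field has characteristic $0$, every multihomogeneous component of $f$ is itself a consequence of $f=0$, so we may replace $f$ by any nonzero multihomogeneous summand; its total degree is still at most $k$. If this summand happens to lie in the linear part $KX\subset F$, then $f=0$ forces some $x_i=0$, the variety $\mathfrak V$ becomes trivial, and the conclusion holds vacuously. Otherwise the summand lies in $F^2$, and by Lemma \ref{the graded structure of square of F} only $GL_d$-irreducibles $W(\lambda)$ with $\lambda=(\lambda_1,\lambda_2)$ (at most two parts) can appear in its decomposition.

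Next, using the reduction already recorded at the beginning of Section 4, I would pick an irreducible summand $W(\lambda)\subset GL_d\cdot f$ together with a corresponding highest weight vector $w_\lambda$ such that $w_\lambda=0$ is a consequence of $f=0$. Since $GL_d$ preserves total degree, one has $|\lambda|=\lambda_1+\lambda_2=\deg w_\lambda\leq k$, which yields the crucial inequality $\lambda_1\leq k$.

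Finally, Lemma \ref{consequences of w(lambda)} applied to this $w_\lambda$ gives that every identity $w_{(\mu_1,\mu_2)}^{(j)}=0$ with $\mu_2\geq\lambda_1$ is a consequence of $f=0$; since the hypothesis $\mu_2\geq k$ automatically implies $\mu_2\geq\lambda_1$, the corollary follows. The argument presents essentially no obstacle beyond the degree bookkeeping $\lambda_1\leq|\lambda|\leq k$ and the trivial exclusion of the linear case, the substantive content having already been packaged into Lemma \ref{consequences of w(lambda)}.
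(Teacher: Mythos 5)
Your proof is correct and follows essentially the same route as the paper: reduce to a highest weight vector $w_{\lambda}$ that is a consequence of $f=0$, observe that $\lambda_1\leq\vert\lambda\vert\leq k$, and invoke Lemma~\ref{consequences of w(lambda)}. The paper compresses this into one sentence (relying on the reduction stated at the start of Section~4), while you additionally spell out the multihomogeneous reduction and the trivial linear case, which is harmless extra care.
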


\begin{proof}
The statement follows immediately from Lemma \ref{consequences of w(lambda)} because if $(\lambda_1,\lambda_2)\vdash k$, then $\lambda_1\leq k$.
\end{proof}

\begin{lemma}\label{consequences in one variable}
The polynomial identity $w_{(k,k)}^{(0)}=(y_1z_2-y_2z_1)^k=0$ has as consequences all identities
\[
(y_1z_1)^k(y_1-z_1)^kw_{\mu}^{(j)}=0
\]
for all $\mu=(\mu_1,\mu_2)$ and all $j=0,1,\ldots,\mu_1-\mu_2$.
\end{lemma}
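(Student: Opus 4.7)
The plan is to derive each of the stated identities in two moves: a single endomorphism that collapses $w_{(k,k)}^{(0)}=(y_1z_2-y_2z_1)^k$ to a consequence supported on $x_1$ alone, followed by multiplication by $w_\mu^{(j)}$ on the right.

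First, I would reinterpret the identity in the $F$-picture. Viewed as an element of $F^2$, the polynomial $(y_1z_2-y_2z_1)^k$ is nothing but $(x_1x_2-x_2x_1)^k$ with the $k$-th power taken as iterated $F$-product. This product is unambiguous: by the rule $(Y_d^\alpha Z_d^\beta)(Y_d^\gamma Z_d^\delta)=Y_d^{\alpha+\gamma}Z_d^{\beta+\delta}$ from (\ref{multiplication in G_d}), the restriction of multiplication to $F^2$ is both associative and commutative, so any parenthesisation gives the same element and it coincides with the polynomial $k$-th power in $K[Y,Z]$.

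Next, I would apply the endomorphism $\varphi$ of $F$ defined by $\varphi(x_2)=x_1^2$ and $\varphi(x_i)=x_i$ for $i\neq 2$. Using (\ref{multiplication in G_d}) one computes $\varphi(x_1x_2)=x_1\cdot x_1^2=y_1\cdot y_1z_1=y_1^2z_1$ and $\varphi(x_2x_1)=x_1^2\cdot x_1=y_1z_1\cdot z_1=y_1z_1^2$, hence
\[
\varphi\bigl((x_1x_2-x_2x_1)^k\bigr)=\bigl(y_1z_1(y_1-z_1)\bigr)^k=(y_1z_1)^k(y_1-z_1)^k.
\]
Since the T-ideal of polynomial identities is closed under endomorphisms, this identity $(y_1z_1)^k(y_1-z_1)^k=0$ is a consequence of $w_{(k,k)}^{(0)}=0$. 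Expanding it as $\sum_{i=0}^{k}\binom{k}{i}(-1)^{k-i}y_1^{k+i}z_1^{2k-i}$ one sees that every monomial has both positive $y_1$-degree and positive $z_1$-degree, so the polynomial lies in the image $\psi(F^2)$ and really does represent an element of $F^2$.

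Finally, since T-ideals are two-sided, multiplying this consequence on the right by the element of $F^2$ whose $\psi$-image is $w_\mu^{(j)}$ preserves the property of being an identity. By the $F^2$ multiplication rule this $F$-product equals the ordinary polynomial product in $K[Y,Z]$, yielding $(y_1z_1)^k(y_1-z_1)^k w_\mu^{(j)}=0$ for every admissible $(\mu,j)$. The only point that really needs a moment's thought is the first move — the observation that substituting $x_1^2$ for $x_2$ turns the commutator $x_1x_2-x_2x_1$ into the factored one-variable expression $y_1z_1(y_1-z_1)$; everything after that is formal bookkeeping inside $G$.
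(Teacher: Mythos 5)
Your proof is correct, and it reaches the paper's key intermediate identity $(y_1z_1)^k(y_1-z_1)^k=0$ by a genuinely different mechanism. The paper applies the derivation $\Delta_{y_2,z_2\to y_1z_1}$ (i.e.\ the partial linearization in $x_2$ evaluated at $x_1^2$, via Lemma~\ref{action of delta}) a total of $k$ times, obtaining $k!\,(y_1z_1)^k(y_1-z_1)^k=0$ and then dividing by $k!$. You instead perform the full substitution $x_2\mapsto x_1^2$ in one step, using that $F$ is relatively free (so the map on generators extends to an endomorphism) and that T-ideals are stable under endomorphisms; since $(x_1x_2-x_2x_1)^k$ is homogeneous of degree $k$ in $x_2$, your single substitution is exactly $\tfrac{1}{k!}$ of the paper's $k$-fold derivation, so the two computations agree. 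Your route is more elementary: it bypasses Lemma~\ref{action of delta} entirely, avoids the factorial (and hence any appeal to characteristic $0$ at this step), and isolates the one genuinely non-formal observation --- that the commutator collapses to $y_1z_1(y_1-z_1)$ under $x_2\mapsto x_1^2$. The paper's derivation formalism buys uniformity with the surrounding machinery (the same operator $\Delta_{y_i,z_i\to u}$ is the tool set up in Section~4), but for this particular lemma nothing is lost by your shortcut. Your remaining steps --- associativity and commutativity of the multiplication restricted to $F^2$ so that the $k$-th power is unambiguous and equals the polynomial power, the check that the resulting polynomial lies in $\psi(F^2)$, and right multiplication within the T-ideal to append $w_\mu^{(j)}$ --- are all sound and match what the paper leaves implicit.
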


\begin{proof}
We apply the derivation $\Delta_{y_2,z_2\to y_1z_1}$ and obtain as a consequence of the identity $w_{(k,k)}^{(0)}=0$ the identity
\[
\Delta_{y_2,z_2\to y_1z_1}(w_{(k,k)}^{(0)})=y_1z_1\left(\frac{\partial}{\partial y_2}+\frac{\partial}{\partial z_2}\right)(y_1z_2-y_2z_1)^k
\]
\[
=ky_1z_1(y_1z_2-y_2z_1)^{k-1}\left(\frac{\partial}{\partial y_2}+\frac{\partial}{\partial z_2}\right)(y_1z_2-y_2z_1)
\]
\[
=ky_1z_1(y_1-z_1)(y_1z_2-y_2z_1)^{k-1}=0.
\]
Continuing in this way we obtain
\[
\Delta_{y_2,z_2\to y_1z_1}^k(w_{(k,k)}^{(0)})=k!(y_1z_1)^k(y_1-z_1)^k=0
\]
which gives that $(y_1z_1)^k(y_1-z_1)^kw_{\mu}^{(j)}=0$ for all $\mu$ and all $j$.
\end{proof}

\begin{corollary}\label{generating by free algebra of rank 1}
The variety $\mathfrak B$ is generated by its one-generated free algebra $F_1({\mathfrak B})$.
\end{corollary}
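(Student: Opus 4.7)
The plan is a proof by contradiction. Suppose $\mathfrak{V} = \text{var}(F_1(\mathfrak{B}))$ is a proper subvariety of $\mathfrak{B}$. Then $\mathfrak{V}$ satisfies some nontrivial polynomial identity $f = 0$ with $0 \neq f \in F(\mathfrak{B})$ of some degree $k \geq 2$. I will derive a consequence of $f = 0$ which, when read in $G_1 \cong F_1(\mathfrak{B})$, is a nonzero one-variable element of $F_1(\mathfrak{B})$. This will contradict the fact that $F_1(\mathfrak{B}) \in \mathfrak{V}$, since by the universal property of the free algebra, any one-variable polynomial identity of $F_1(\mathfrak{B})$ must equal the zero element of $F_1(\mathfrak{B})$ (obtained by evaluating at the free generator).

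First I would apply Corollary \ref{consequences of identity of degree k} to pass from $f = 0$ to the consequence $w_{(k,k)}^{(0)} = (y_1 z_2 - y_2 z_1)^k = 0$. Then I would invoke Lemma \ref{consequences in one variable} with the specialization $\mu = (n)$, so that $w_{(n)}^{(j)} = y_1^j z_1^{n-j}$ depends only on the variable $x_1$. Choosing for concreteness $n = 2$ and $j = 1$, I obtain the one-variable consequence
\[
(y_1 z_1)^k (y_1 - z_1)^k y_1 z_1 = 0.
\]

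The key check is that the left-hand side is a nonzero element of $G_1$. Expanding $(y_1 - z_1)^k$ by the binomial theorem and multiplying through produces a linear combination of monomials $y_1^{2k+1-i} z_1^{k+1+i}$ for $i = 0, 1, \ldots, k$, all with nonzero binomial coefficients and pairwise distinct exponent pairs. Hence the expression is nonzero in $\omega(K[y_1]) \otimes \omega(K[z_1])$, which by Lemma \ref{the graded structure of square of F} is exactly the image of $F_1^2(\mathfrak{B})$ under $\psi$. Pulling back via $\psi^{-1}$ yields a nonzero element of $F_1(\mathfrak{B})$, contradicting its vanishing as an identity in $F_1(\mathfrak{B})$.

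The hardest part of this argument is already done in Lemma \ref{consequences w(lambda)} and Lemma \ref{consequences in one variable}: they do the real work of extracting a one-variable identity from an arbitrary identity of $\mathfrak{V}$. What remains is essentially a bookkeeping check that the resulting polynomial in $y_1, z_1$ is nonzero, which is immediate from the distinctness of the monomials in the binomial expansion. No further input from the structure of $\mathfrak{B}$ is needed beyond the isomorphism $\psi$ between $F_1(\mathfrak{B})$ and $G_1$ recalled in Section~2.
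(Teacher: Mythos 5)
Your proposal is correct and follows essentially the same route as the paper: contradiction via Lemma \ref{consequences of w(lambda)} (equivalently Corollary \ref{consequences of identity of degree k}) to reach $w_{(k,k)}^{(0)}=0$, then Lemma \ref{consequences in one variable} to extract a one-variable identity that is visibly nonzero in $G_1\cong F_1({\mathfrak B})$. The only cosmetic difference is that you multiply by the extra factor $y_1z_1$ before checking nonvanishing, whereas the paper uses $(y_1z_1)^k(y_1-z_1)^k=0$ directly; both checks are immediate.
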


\begin{proof}
If $\text{var}(F_1({\mathfrak B}))\not=\mathfrak B$, then by Lemma \ref{consequences of w(lambda)} the algebra
$F_1({\mathfrak B})$ satisfies some identity $w_{(k,k)}^{(0)}$ and by Lemma \ref{consequences in one variable} satisfies the identity
$(y_1z_1)^k(y_1-z_1)^k=0$ in one variable. This means that $(y_1z_1)^k(y_1-z_1)^k=0$ in $F_1({\mathfrak B})$
which is impossible.
\end{proof}

The following theorem is the first main result of our paper.

\begin{theorem}\label{main theorem}
If $\mathfrak V$ is a proper subvariety of the variety $\mathfrak B$ of all bicommutative algebras such that
$\mathfrak V$ satisfies a polynomial identity $f=0$ of degree $k$, $0\not=f\in F({\mathfrak B})$, then
$c_n({\mathfrak V})$ is bounded by a polynomial of degree $k-1$.
\end{theorem}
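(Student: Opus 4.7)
The plan is to use the $GL_d$-decomposition of $F_d({\mathfrak V})$ for $d\geq n$, which writes $c_n({\mathfrak V})=\sum_\lambda m_\lambda({\mathfrak V})\,d_\lambda$ with $\lambda$ running over partitions of $n$, and to bound each contributing multiplicity uniformly in $n$. Because ${\mathfrak V}\subseteq{\mathfrak B}$, only shapes with at most two parts contribute. I would first reduce to the case that $f=w_\lambda$ is itself a highest weight vector of some shape $\lambda\vdash k$: the $GL_d$-submodule generated by the multilinearisation of $f$ contains an irreducible $W(\lambda)$ with $|\lambda|=k$, whose highest weight vector is a consequence of $f=0$. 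Then Corollary~\ref{consequences of identity of degree k} gives $m_{(n-\lambda_2,\lambda_2)}({\mathfrak V})=0$ for $\lambda_2\geq k$, so
\[
c_n({\mathfrak V})=\sum_{\lambda_2=0}^{k-1}m_{(n-\lambda_2,\lambda_2)}({\mathfrak V})\,d_{(n-\lambda_2,\lambda_2)}.
\]

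The key step, and also the main obstacle, is to bound each remaining multiplicity by a constant in $n$. By Lemma~\ref{consequences of w(lambda)} the variety ${\mathfrak V}$ satisfies $w_{(k,k)}^{(0)}=(y_1z_2-y_2z_1)^k=0$, and hence by Lemma~\ref{consequences in one variable} also the identity $(y_1z_1)^k(y_1-z_1)^k\,w_\mu^{(j)}=0$ in $G_2$ for every $\mu$ and $j$. Specialising $\mu=(n-\lambda_2,\lambda_2)$ and carrying out the ordinary polynomial multiplication in $G_2$, one computes
\[
(y_1z_1)^k(y_1-z_1)^k\,w_{(n-\lambda_2,\lambda_2)}^{(j)}=\sum_{i=0}^k(-1)^i\binom{k}{i}\,w_{(N-\lambda_2,\lambda_2)}^{(2k+j-i)}
\]
with $N:=n+3k$. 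Writing $v_J:=w_{(N-\lambda_2,\lambda_2)}^{(J)}$ and $J_0:=k+j$, the vanishing of this sum for $j=0,1,\ldots,n-2\lambda_2$ translates, up to an overall sign, into
\[
\sum_{m=0}^k(-1)^m\binom{k}{m}v_{J_0+m}=0,\qquad J_0=k,k+1,\ldots,N-2k-2\lambda_2,
\]
that is, $\Delta^k v_{J_0}=0$ for the $k$-th finite-difference operator. Since the general solution of $\Delta^k v=0$ is a polynomial in $J$ of degree less than $k$ with coefficients in $G_2({\mathfrak V})$, the span of $v_k,v_{k+1},\ldots,v_{N-k-2\lambda_2}$ has dimension at most $k$; the $2k$ boundary values $v_0,\ldots,v_{k-1}$ and $v_{N-k-2\lambda_2+1},\ldots,v_{N-2\lambda_2}$, which are not touched by any of these relations, add at most $2k$ further dimensions. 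Hence $m_{(N-\lambda_2,\lambda_2)}({\mathfrak V})\leq 3k$ uniformly in $N$, and therefore uniformly in $n$.

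Finally, combining this uniform multiplicity bound with Lemma~\ref{degree for Sn-characters}, which asserts that $d_{(n-\lambda_2,\lambda_2)}$ is a polynomial in $n$ of degree $\lambda_2$, yields
\[
c_n({\mathfrak V})\leq 3k\sum_{\lambda_2=0}^{k-1}d_{(n-\lambda_2,\lambda_2)},
\]
which is a polynomial in $n$ of degree $k-1$. The delicate ingredients are the index bookkeeping needed to identify the product in $G_2$ as the advertised linear combination of highest weight vectors of shape $(N-\lambda_2,\lambda_2)$, and the classical fact that $\Delta^k v=0$ forces $v$ to depend on $J$ as a vector-valued polynomial of degree less than $k$; both are routine once set up correctly.
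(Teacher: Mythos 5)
Your argument is correct and takes essentially the same route as the paper: reduce to a highest weight vector, kill all $\lambda_2\geq k$ via Corollary~\ref{consequences of identity of degree k}, and use the consequence $(y_1z_1)^k(y_1-z_1)^kw_\mu^{(j)}=0$ from Lemma~\ref{consequences in one variable} to bound every surviving multiplicity by $3k$, then sum against Lemma~\ref{degree for Sn-characters}. The paper phrases your finite-difference step equivalently as exhibiting $\mu_1-\mu_2-3k+1$ linearly independent highest weight vectors of weight $\mu$ inside the T-ideal (so $m_\mu({\mathfrak V})\leq(\mu_1-\mu_2+1)-(\mu_1-\mu_2-3k+1)=3k$), and covers the shapes with $\mu_1-\mu_2<3k$ --- which your relations, living in degree $n+3k$, do not reach --- by the trivial bound $m_\mu({\mathfrak V})\leq m_\mu({\mathfrak B})\leq 3k$.
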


\begin{proof}
Let
\begin{equation}\label{cocharacters of V}
\chi_n({\mathfrak V})=\sum_{\lambda\vdash n}m_{\lambda}({\mathfrak V})\chi_{\lambda},\quad n=1,2,\ldots,
\end{equation}
be the cocharacter sequence of $\mathfrak V$. By Proposition \ref{the n-th cocharacter of B} the summation in (\ref{cocharacters of V}) runs on
$\lambda=(\lambda_1,\lambda_2)\vdash n$. By Corollary \ref{consequences of identity of degree k} we obtain that
$m_{(\lambda_1,\lambda_2)}({\mathfrak V})=0$ for $\lambda_2\geq k$. If $\lambda_1-\lambda_2\leq 3k-1$, then
\[
m_{(\lambda_1,\lambda_2)}({\mathfrak V})\leq m_{(\lambda_1,\lambda_2)}({\mathfrak B}) \leq \lambda_1-\lambda_2+1\leq 3k.
\]
Now, let $\lambda_1-\lambda_2\geq 3k$. By Lemma \ref{consequences in one variable}, the variety $\mathfrak V$ satisfies the identities
\[
w_j=(y_1z_1)^k(y_1-z_1)^k(y_1z_2-y_2z_1)^{\lambda_2}y_1^jz_1^{\lambda_1-\lambda_2-3k-j}=0,\quad
j=0,1,\ldots,\lambda_1-\lambda_2-3k.
\]
All $w_j$, $j=0,1,\ldots,\lambda_1-\lambda_2-3k$, are linearly independent in $F_2({\mathfrak B})$
and are highest weight vectors for $\text{\rm GL}_2$-submodules of $F_2({\mathfrak B})$.
Hence the multiplicity $m_{\lambda}({\mathfrak V})$ satisfies the inequality
\[
m_{\lambda}({\mathfrak V})\leq m_{\lambda}({\mathfrak B})-(\lambda_1-\lambda_2-3k+1)=(\lambda_1-\lambda_2+1)-(\lambda_1-\lambda_2-3k+1)=3k.
\]
Hence (\ref{cocharacters of V}) satisfies the inequality
\[
\chi_n({\mathfrak V})=\sum_{(\lambda_1,\lambda_2)\vdash n\atop \lambda_2<k}m_{\lambda}({\mathfrak V})\chi_{(\lambda_1,\lambda_2)}
\leq \sum_{j=0}^{k-1}3k\chi_{(n-j,j)}.
\]
We obtain that the codimension sequence $c_n({\mathfrak B})$, $n=1,2,\ldots$, satisfies
\[
c_n({\mathfrak B})\leq\sum_{j=0}^{k-1}3kd_{(n-j,j)}
\]
which by Lemma \ref{degree for Sn-characters} is a polynomial of degree $k-1$.
\end{proof}

\begin{remark}\label{better bound}
We may precise Corollary \ref{consequences of identity of degree k}: If ${\mathfrak V}\subset{\mathfrak B}$
satisfies an identity $w_{\lambda}=0$ of degree $k$ and $\lambda_2>0$ in $\lambda=(\lambda_1,\lambda_2)\vdash k$,
then $\lambda_1\leq k-1$ and $\mathfrak V$
satisfies all identities $w_{\mu}^{(j)}=0$ for $\mu=(\mu_1,\mu_2)$ and $\mu_2\geq k-1$. Hence in this case $c_n({\mathfrak V})$
is bounded by a polynomial of degree $k-2$.
\end{remark}

\begin{example} The bound by a polynomial of degree $k-2$ in Remark \ref{better bound} is sharp.
Let $\mathfrak V$ be the subvariety of $\mathfrak B$ defined by the polynomial identity of right nilpotency $(\cdots((x_1x_2)x_3)\cdots )x_k=0$.
It is easy to see that the image in $G$ of the T-ideal $T({\mathfrak V})$ of the identities of $\mathfrak V$
is generated as an ordinary two-sided ideal by the products $y_{i_1}z_{i_2}\cdots z_{i_k}$. Hence if $\mu_2\geq k-1$, then
all $w_{(\mu_1,\mu_2)}^{(j)}$ belong to this T-ideal and
\[
w_{(n-k+2,k-2)}^{(n-2k+4)}=y_1^{n-2k+4}(y_1z_2-y_2z_1)^{k-2}
\]
does not belong to this ideal. Hence
$c_n({\mathfrak V})\geq d_{(n-k+2,k-2)}$ which is a polynomial of degree $k-2$. We do not know whether there exists
a variety ${\mathfrak V}\subset\mathfrak B$ satisfying a polynomial identity
in one variable of degree $k$ such that $c_n({\mathfrak V})$ grows as a polynomial of degree $k-1$.
\end{example}

\section{Two-dimensional algebras}

The classification of all two-dimensional algebras can be traced back to the two-dimensional part of the classification project
in the seminal book by B. Peirce \cite{Pe} published lithographically in 1870
in a small number of copies for distribution among his friends
and then reprinted posthumously in 1881 with addenda of his son C.S. Peirce.
(See Grattan-Guinness \cite{GG} for the contributions of Peirce.)
Starting in 2000 with the paper by Petersson \cite{P} (which contains also the history of the classification)
and the paper by Anan'in and Mironov \cite{AnM} there are several papers
containing different kinds of classification of two-dimensional algebras --
by Goze and Remm \cite{GR}, Ahmed, Bekbaev, and Rakhimov \cite{ABR},
Rausch de Traubenberg and Slupinski \cite{RTS}, Kaygorodov and Volkov \cite{KaV}. Concerning the polynomial identities of two-dimensional algebras,
Giambruno, Mishchenko, and Zaicev \cite{GMZ} proved that the growth of the codimension sequence $c_n(A)$ of such an algebra $A$
over a field of characteristic 0 is either linear (and bounded by $n+1$) or grows exponentially as $2^n$.

In this section we shall study the polynomial identities
of two-dimensional bicommutative algebras over an arbitrary field of characteristic 0.
It is well known that if $A$ is an algebra over an infinite field $K$ then the $K$-algebra $A$ and the $E$-algebra $E\otimes_KA$
have the same bases of polynomial identities for any extension $E$ of $K$.
More precisely, if $\{f_i\}\subset K\{X\}$ is a basis of the polynomial identities of the $K$-algebra $A$, then
$\{1\otimes f_i\}\subset E\otimes_KK\{X\}\cong E\{X\}$ is a basis of the polynomial identities of the $E$-algebra $E\otimes_KA$.
Hence in the sequel we may assume that the field $K$ is algebraically closed.

The classification of all two-dimensional bicommutative algebras over an arbitrary algebraically closed field of any characteristic is given
by Kaygorodov and Volkov in \cite{KaV}:

\begin{theorem}\label{classification by Kaygorodov and Volkov}
When the base field $K$ is algebraically closed any two-dimensional bicommutative algebra with nontrivial multiplication
is isomorphic to one of the seven algebras
\begin{equation}\label{seven algebras}
\{\text{\bf A}_3, \text{\bf B}_2(0), \text{\bf B}_2(1), \text{\bf D}_1(0,0), \text{\bf D}_2(1,1), \text{\bf D}_2(0,0), \text{\bf E}_1(0,0,0,0)\},
\end{equation}
where the algebras have bases $\{e_1,e_2\}$ and multiplication tables given below:
\[
\begin{array}{lllll}
\text{\bf A}_3:&e_1e_1=e_2,&e_1e_2=0,&e_2e_1=0,&e_2e_2=0;\\
\text{\bf B}_2(0):&e_1e_1=0,&e_1e_2=e_1,&e_2e_1=0,&e_2e_2=0;\\
\text{\bf B}_2(1):&e_1e_1=0,&e_1e_2=0,&e_2e_1=e_1,&e_2e_2=0;\\
\text{\bf D}_1(0,0):&e_1e_1=e_1,&e_1e_2=e_1,&e_2e_1=0,&e_2e_2=0;\\
\text{\bf D}_2(1,1):&e_1e_1=e_1,&e_1e_2=e_2,&e_2e_1=e_2,&e_2e_2=0;\\
\text{\bf D}_2(0,0):&e_1e_1=e_1,&e_1e_2=0,&e_2e_1=0,&e_2e_2=0;\\
\text{\bf E}_1(0,0,0,0):&e_1e_1=e_1,&e_1e_2=0,&e_2e_1=0,&e_2e_2=e_2.\\
\end{array}
\]
\end{theorem}

\begin{remark}
In the classification of two-dimensional bicommutative algebras given in the preliminary version of \cite[Section 7.2]{KaV}
there is one more algebra $\text{\bf D}_1(1,0)$. This algebra is isomorphic to
the algebra $\text{\bf D}_1(0,0)$ because in \cite[Table 1]{KaV}
the pairs $(0,0)$ and $(1,0)$ belong to the same orbit of the cyclic group of order 2 generated by $\varrho$, where
\[
^{\varrho}(\alpha,\beta)=(1-\alpha+\beta,\beta),\quad (\alpha,\beta)\in K^2.
\]
\end{remark}

The algebra $\text{\bf A}_3$ is commutative and nilpotent of class 3. Hence the T-ideal of its polynomial identities is generated by
\[
x_1x_2=x_2x_1,\quad (x_1x_2)x_3=0
\]
and the cocharacter sequence of $\text{\bf A}_3$ is
\[
\chi_1(\text{\bf A}_3)=\chi_{(1)},\quad\chi_2(\text{\bf A}_3)=\chi_{(2)},\quad \chi_n(\text{\bf A}_3)=0,\quad n=3,4,\ldots.
\]
Similarly, the algebras $\text{\bf D}_2(1,1), \text{\bf D}_2(0,0)$, and $\text{\bf E}_1(0,0,0,0)$ are associative-commutative,
have the same bases of polynomial identities consisting of
\[
x_1x_2=x_2x_1,\quad (x_1x_2)x_3=x_1(x_2x_3)
\]
and their cocharacter sequence is
\[
c_n(\text{\bf D}_2(1,1))=c_n(\text{\bf D}_2(0,0))=c_n(\text{\bf E}_1(0,0,0,0))=\chi_{(n)},\quad n=1,2,\ldots .
\]
Hence to complete the description of the polynomial identities of two-dimensional bicommutative algebras
it is sufficient to handle the cases $A=\text{\bf B}_2(0),\text{\bf B}_2(1),\text{\bf D}_1(0,0)$.
These three algebras satisfy the condition $\dim A^2=1$. For our purposes it is more convenient to have another presentation of the algebras.
The next proposition shows that over any field of characteristic 0 the two-dimensional bicommutative algebras $A$ with $\dim A^2=1$
are in the list of Theorem \ref{classification by Kaygorodov and Volkov}.

\begin{proposition}\label{classification of two-dimensional algebras}
Over an arbitrary field $K$ of characteristic $0$ there are only five nonisomorphic two-dimensional bicommutative algebras $A$ such that
$\dim A^2=1$. They are one-generated and isomorphic to the algebras
\begin{equation}\label{five algebras}
A_{0,0},A_{1,1},A_{0,1},A_{1,0},A_{1,-1}
\end{equation}
where the algebra $A_{\pi,\varrho}$ is with multiplication given in {\rm (\ref{two-dimensional algebras})}.
The five algebras in {\rm (\ref{five algebras})} are isomorphic, respectively, to the algebras
\[
\text{\bf A}_3, \text{\bf D}_2(0,0), \text{\bf B}_2(0), \text{\bf B}_2(1), \text{\bf D}_1(0,0)
\]
from the list in {\rm (\ref{seven algebras})}.
\end{proposition}

\begin{proof}
Let $A$ have a basis $\{a,b\}$, where $a\in A\setminus A^2$, $b\in A^2$. If $a^2\not=0$, then $a^2=\alpha b$, $0\not=\alpha\in K$.
Hence $A$ is generated by $a$ and has a basis $\{a,a^2\}$.

Now, let $a^2=0$. If $ab=\beta b\not=0$, $0\not=\beta\in K$, then the identity of right-commutativity gives
\[
\beta ba=(ab)a=(aa)b=0,
\]
and hence $ba=0$. Let $b^2=\gamma b$, $\gamma\in K$. Then for $\eta\in K$
\[
(a+\eta b)^2=\eta(\beta+\eta\gamma)b
\]
and we always can choose $\eta$ in a way to have $(a+\eta b)^2\not=0$. Again $A$ is one generated and has a basis $\{a+\eta b,(a+\eta b)^2\}$.
Similarly, if $ba\not=0$, then $ab=0$ and again $A$ is one-generated.

Hence we may assume that $A$ has a basis $\{r,r^2\}$. Let
\[
rr^2=\pi r^2, r^2r=\varrho r^2, \quad \pi,\varrho\in K.
\]
Then the right-commutativity implies
\[
r^2r^2=(rr)r^2=(rr^2)r=\pi r^2r=\pi\varrho r^2.
\]
Hence the multiplication of the algebra $A$ is as of the algebra $A_{\phi,\varrho}$ in (\ref{two-dimensional algebras}).
Let $\pi=0$, $\varrho\not=0$. If we replace the generator $r$ by $r=\varrho r_1$, then
\[
r^2r=\varrho r^2,\quad \varrho^3r_1^2r_1=\varrho \varrho^2r_1^2,\quad r_1^2r_1=r_1^2,
\]
i.e., $A_{0,\varrho}\cong A_{0,1}$. Similarly, $A_{\pi,0}\cong A_{1,0}$. If $\pi=\varrho\not=0$, then the change of the generator $r$ with
$r=\pi r_1$ gives that
\[
r^2r=\pi r^2,\quad \pi^3r_1^2r_1=\pi \pi^2r_1^2,\quad r_1^2r_1=r_1^2,\quad r_1r_1^2=r_1^2,
\]
and $A_{\pi,\pi}\cong A_{1,1}$. Finally, let $\pi\not=\varrho$ be different from 0. We fix solutions $\xi$ and $\eta$ of the linear system
\[
\pi(\xi+\varrho\eta)=1,\quad \varrho(\xi+\pi\eta)=-1.
\]
Then $r_1=\xi r+\eta r^2$ satisfies the conditions
\[
r_1^2=(\xi+\pi\eta)(\xi+\varrho\eta)r^2=-\frac{1}{\pi\varrho}r^2,
\]
\[
r_1r_1^2=-\frac{1}{\pi\varrho}(\xi r+\eta r^2)r^2=-\frac{1}{\pi\varrho}\pi(\xi+\eta\varrho)r^2=\pi(\xi+\eta\varrho)r_1^2=r_1^2,
\]
\[
r_1^2r_1=\varrho(\xi+\pi\eta)r_1^2=-r_1^2,
\]
i.e., $A_{\pi,\varrho}\cong A_{1,-1}$.

The isomorphisms between the algebras
$A_{0,0},A_{1,1},A_{0,1},A_{1,0},A_{1,-1}$ and, respectively, the algebras
$\text{\bf A}_3, \text{\bf D}_2(0,0), \text{\bf B}_2(0), \text{\bf B}_2(1), \text{\bf D}_1(0,0)$
are given as follows:
\[
\begin{array}{lll}
A_{0,0}\cong\text{\bf A}_3:&r\to e_1,&r^2\to e_2;\\
A_{1,1}\cong\text{\bf D}_2(0,0):&r\to e_1+e_2,&r^2\to e_1;\\
A_{0,1}\cong\text{\bf B}_2(0):&r\to e_1+e_2,&r^2\to e_1;\\
A_{1,0}\cong\text{\bf B}_2(1):&r\to e_1+e_2,&r^2\to e_1;\\
A_{1,-1}\cong\text{\bf D}_1(0,0):&r\to e_1-2e_2,&r^2\to-e_1.\\
\end{array}
\]
\end{proof}

The next theorem gives bases for the polynomial identities and the cocharacter sequences of the three nonassociative algebras $A_{0,1},A_{1,0},A_{1,-1}$.

\begin{theorem}\label{PIs of two-dimensional algebras}
{\rm (i)} As subvarieties of the variety $\mathfrak B$ of all bicommutative algebras the varieties $\text{\rm var}(A_{0,1})$ and $\text{\rm var}(A_{1,0})$ generated
by the algebras $A_{0,1}$ and $A_{1,0}$ are defined by the identities of left-nilpotency $x_1(x_2x_3)=0$ and right-nilpotency $(x_1x_2)x_3=0$, respectively.
Their cocharacter and codimension sequences coincide and are
\[
\chi_1(A_{0,1})=\chi_1(A_{1,0})=\chi_{(1)},\chi_n(A_{0,1})=\chi_{(n)}+\chi_{(n-1,1)},\quad n=2,3,\ldots,
\]
\[
c_n(A_{0,1})=c_n(A_{1,0})=n,\quad n=1,2,\ldots.
\]
 {\rm (ii)} The algebra $A_{1,-1}$ generates the whole variety $\mathfrak B$.
\end{theorem}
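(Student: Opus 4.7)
My plan is to prove (ii) first, by a contradiction argument in the spirit of Corollary \ref{generating by free algebra of rank 1}, and then settle (i) via a direct cocharacter computation.

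For (ii), suppose for contradiction that $\text{var}(A_{1,-1}) \ne \mathfrak B$, so $A_{1,-1}$ satisfies some nontrivial identity of some degree $k$. Corollary \ref{consequences of identity of degree k} then forces $A_{1,-1}$ to satisfy $w_{(k,k)}^{(0)} = (y_1z_2 - y_2z_1)^k = 0$, and Lemma \ref{consequences in one variable} further forces the one-variable consequence $(y_1z_1)^k(y_1 - z_1)^k = 0$. To produce a contradiction I would substitute $x_1 = r$ in $A_{1,-1}$. Using the rules $r\cdot r^2 = r^2$ and $r^2\cdot r = -r^2$, a short induction (with the $G$-multiplication rule $(Y^\alpha Z^\beta)(Y^\gamma Z^\delta) = Y^{\alpha+\gamma}Z^{\beta+\delta}$ in the inductive step) shows that the $G_1$-monomial $y_1^pz_1^q$ evaluates to $(-1)^{q-1}r^2$ for all $p,q \ge 1$. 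Expanding $(y_1z_1)^k = y_1^kz_1^k$ gives
\[
(y_1z_1)^k(y_1-z_1)^k = \sum_{i=0}^k (-1)^i\binom{k}{i}y_1^{2k-i}z_1^{k+i},
\]
whose evaluation in $A_{1,-1}$ is $\sum_{i=0}^k (-1)^i\binom{k}{i}(-1)^{k+i-1}r^2 = (-1)^{k-1}2^k r^2 \ne 0$, the desired contradiction.

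For (i), I would proceed in three steps. First, verify the declared identities hold: since $A_{0,1}^2 = Kr^2$ with $r\cdot r^2 = 0$ and $r^2\cdot r^2 = 0$, every left product $a\cdot b$ with $b \in A_{0,1}^2$ vanishes, so $A_{0,1}$ satisfies $x_1(x_2x_3) = 0$; the argument for $A_{1,0}$ is dual. Second, compute the cocharacter of the subvariety $\mathfrak W \subseteq \mathfrak B$ defined by $x_1(x_2x_3) = 0$: in the $G_d$-picture this identity kills exactly the basis monomials (\ref{basis of F^2}) with $p \ge 2$, so the $\psi$-image of $F_d^2(\mathfrak W)$ is the span of $y_{i_1}z_{j_1}\cdots z_{j_q}$ with one $y$ and $q \ge 1$ $z$'s. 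As a $GL_d$-module this is $KY_d \otimes \omega(K[Z_d])$, whose degree-$n$ component is $W(1)\otimes W(n-1) \cong W(n)\oplus W(n-1,1)$ by the Young rule (\ref{Young rule}); this yields the claimed $\chi_n$ and $c_n(\mathfrak W) = 1+(n-1) = n$. Third, show $\text{var}(A_{0,1}) = \mathfrak W$ by exhibiting $c_n(A_{0,1}) \ge n$: the $n$ multilinear polynomials
\[
m_i = ((\cdots((x_ix_{j_1})x_{j_2})\cdots)x_{j_{n-1}}), \qquad \{j_1,\ldots,j_{n-1}\} = \{1,\ldots,n\}\setminus\{i\},
\]
are well-defined by right-commutativity and are linearly independent modulo $T(A_{0,1})$: substituting $x_\ell = r^2$ and all other $x_j = r$ produces $r^2$ for $m_\ell$ and $0$ for every other $m_i$ (whose innermost product becomes $r\cdot r^2 = 0$). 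The statement for $A_{1,0}$ follows by the right-left mirror argument.

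The main obstacle I anticipate is the $\psi$-bookkeeping in part (ii): one must verify that the formal expression $(y_1z_1)^k(y_1-z_1)^k$, derived as a consequence in $G_1$, really corresponds to an element of $F_1(\mathfrak B)$ whose evaluation at $x_1 = r$ in $A_{1,-1}$ agrees with the expansion above. Once the evaluation formula $y_1^pz_1^q \mapsto (-1)^{q-1}r^2$ is secured, the remaining step is the immediate binomial identity $\sum_{i=0}^k(-1)^i\binom{k}{i}(-1)^{k+i-1} = (-1)^{k-1}2^k$.
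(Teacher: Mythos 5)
Your proposal is correct, and its overall architecture matches the paper's: part (ii) runs through the same reduction chain (Corollary \ref{consequences of identity of degree k} followed by Lemma \ref{consequences in one variable}) down to a one-variable identity, and part (i) amounts to computing the relatively free algebra of the subvariety of $\mathfrak B$ cut out by $x_1(x_2x_3)=0$ and checking that $A_{0,1}$ satisfies nothing more. The differences are only in the finishing steps. In (ii) the paper proves the stronger fact that $A_{1,-1}$ satisfies \emph{no} one-variable identity at all, by evaluating an arbitrary $w_{(n)}(y_1,z_1)$ on every element $\gamma r+\delta r^2$ and using that $K$ is infinite; you instead evaluate only the single forced consequence $(y_1z_1)^k(y_1-z_1)^k$ at $x_1=r$, via the formula $y_1^pz_1^q\mapsto(-1)^{q-1}r^2$ (which is right: the innermost square gives $r^2$, each of the $q-1$ right multiplications contributes a factor $-1$, and left multiplication by $r$ is the identity on $Kr^2$), getting $(-1)^{k-1}2^kr^2\neq 0$. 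That is leaner and buys exactly what the contradiction needs, at the price of the stronger ``no identity in one variable'' statement. In (i) the paper detects the surviving modules by evaluating the highest weight vectors $w_{(n)}^{(1)}$ and $w_{(n-1,1)}^{(0)}$ on $A_{0,1}$, whereas you combine the upper bound $W(1)\otimes W(n-1)\cong W(n)\oplus W(n-1,1)$, hence $c_n\leq n$, with $n$ explicit multilinear elements $m_i$ separated by the substitutions $x_\ell=r^2$; this lower-bound computation checks out (only $m_\ell$ survives, every other $m_i$ collapses to $r\cdot r^2=0$ or $r^2\cdot r^2=0$), so both routes are complete.
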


\begin{proof}
(i) Clearly the algebra $A_{0,1}$ satisfies the polynomial identity $x_1(x_2x_3)=0$. The origins in $F=F({\mathfrak B})$ of the polynomials
$w_{\lambda}^{(j)}$ from (\ref{hwv of W(lambda)}) have the form
\[
w_{(n)}^{(j)}(x_1)=\underbrace{x_1(\cdots (x_1(((x_1}_{j\text{ times}}\underbrace{x_1)\cdots )x_1}_{n-j\text{ times}}))\cdots),
\]
\[
w_{(\lambda_1,\lambda_2)}^{(j)}(x_1,x_2)
=\underbrace{x_1(\cdots x_1}_{j\text{ times}}((\cdots((x_1x_2-x_2x_1)^{\lambda_2}\underbrace{x_1)\cdots )x_1}_{\lambda_1-\lambda_2-j\text{ times}})\cdots).
\]
Obviously $w_{(\lambda_1,\lambda_2)}^{(j)}$ follows from $x_1(x_2x_3)=0$ for $\lambda=(n)$, $j=2,\ldots,n-1$, $n\geq 3$,
for $\lambda=(n-1,1)$, $j=1,\ldots,n-2$, and for $\lambda=(\lambda_1,\lambda_2)$, $\lambda_2\geq 2$.
On the other hand $w_{(n)}^{(1)}(r)=r^2\not=0$, $w_{(n-1,1)}^{(0)}(r,r^2)=-r^2\not=0$. This shows that the identities of $A_{0,1}$ follow from $x_1(x_2x_3)=0$,
$\chi_1(A_{0,1})=\chi_{(1)}$, $\chi_n(A_{0,1})=\chi_{(n)}+\chi_{(n-1,1)}$, $n=2,3,\ldots$, and $c_n(A_{0,1})=n$, $n=1,2,\ldots$. The proof for $A_{1,0}$ is similar.

(ii) By Corollary \ref{generating by free algebra of rank 1} it is sufficient to show that the algebra $A_{1,-1}$ does not satisfy an identity in one variable.
Let
\[
w_{(n)}(y_1,z_1)=\sum_{j=1}^{n-1}\xi_jw_{(n)}^{(j)}(y_1,z_1),\quad \xi_j\in K,
\]
be a polynomial in $G$ which corresponds to a homogeneous polynomial identity $f(x_1)=0$ in one variable and of degree $n\geq 2$,
$0\not=f(x_1)\in F({\mathfrak B})$. We shall evaluate $f(x_1)$ on all $\gamma r+\delta r^2\in A_{1,-1}$, $\gamma,\delta\in K$.
Since
\[
(\gamma r+\delta r^2)^2=(\gamma^2-\delta^2)r^2,
\]
\[
(\gamma r+\delta r^2)\cdot(\gamma r+\delta r^2)^2=(\gamma-\delta)(\gamma^2-\delta^2)r^2,
\]
\[
(\gamma r+\delta r^2)^2\cdot(\gamma r+\delta r^2)=-(\gamma+\delta)(\gamma^2-\delta^2)r^2,
\]
we obtain that the evaluation of the proimage of $w_{(n)}^{(j)}(y_1,z_1)$ on $\gamma r+\delta r^2$ is equal to
\[
(-1)^{n-j-1}(\gamma^2-\delta^2)(\gamma-\delta)^{j-1}(\gamma+\delta)^{n-j-1}r^2=(-1)^{n-1}(\delta-\gamma)^j(\delta+\gamma)^{n-j}.
\]
Hence
\[
f(\gamma r+\delta r^2)=(-1)^{n-1}w_{(n)}(\delta-\gamma,\delta+\gamma)r^2=0.
\]
When $\gamma$ and $\delta$ run on the whole field $K$ the same holds for $\delta-\gamma$ and $\delta+\gamma$. Therefore the polynomial
$w_{(n)}(y_1,z_1)$ vanishes evaluated on the infinite field $K$ and hence is identically equal to 0. This means that
$A_{1,-1}$ does not satisfy any polynomial identity in one variable and hence generates the whole variety $\mathfrak B$.
\end{proof}

The following easy lemma gives an upper bound for the codimensions of a finite dimensional algebra.
It makes more precise the bound for the codimensions established for graded algebras in \cite{BaD} and independently in \cite{GZ4}.

\begin{lemma}\label{growth of codimensions}
Let $A$ be a finite dimensional algebra and let $k$ be a positive integer. Then for all $n\geq k$
\[
c_n(A)\leq \dim(A^k)\dim^n(A).
\]
\end{lemma}

\begin{proof}
Let $\dim(A)=p$ and $\dim(A^k)=q$. We fix a basis $\{r_1,\ldots,r_q\}$ of $A^k$ and extend it to a basis $\{r_1,\ldots,r_p\}$ of $A$.
We consider the multilinear identity
\[
f(x_1,\ldots,x_n)=\sum_{(\sigma)}\xi_{(\sigma)}(x_{\sigma(1)}\cdots)(\cdots x_{\sigma(n)})=0,\quad \xi_{(\sigma)}\in K,
\]
where the summation runs on all permutations $\sigma\in S_n$ and all possible bracket decompositions. Clearly, $f(x_1,\ldots,x_n)=0$
is a polynomial identity for $A$ if and only if $f(r_{i_1},\ldots,r_{i_n})=0$ for all possible choices of the basis elements $r_{i_1},\ldots,r_{i_n}$.
Since $\deg(f)=n$ and $n\geq k$ the evaluations of $f(x_1,\ldots,x_n)$ on $R$ belong to $A^k$. Let
\[
f(r_{i_1},\ldots,r_{i_n})=\sum_{j=1}^qf_j(r_{i_1},\ldots,r_{i_n})r_j,
\]
where $f_j(r_{i_1},\ldots,r_{i_n})\in K$ are linear functions in the coefficients $\xi_{(\sigma)}$.
Considering $\xi_{(\sigma)}$ as unknowns, we obtain the linear homogeneous system
\begin{equation}\label{linear system}
f_j(r_{i_1},\ldots,r_{i_n})=0,\quad r_{i_1},\ldots,r_{i_n}\in\{r_1,\ldots,r_p\},j=1,\ldots,q.
\end{equation}
The system has $n!C_n$ unknowns, where $C_n$ is the $n$-th Catalan number (equal to the number of the bracket decompositions).
Since the codimension $c_n(A)$ is equal to the rank of the system and the system has $qp^n$ equations, its rank is less or equal to $qp^n$
and the same holds for the $n$-th codimension $c_n(A)$.
\end{proof}

\begin{remark}
It was shown in \cite{GMZ} that if the two-dimensional algebra $A$ has a one-dimensional nilpotent ideal, then
$c_n(A)\leq n+1$. The algebras $A_{0,1}$ and $A_{1,0}$ satisfy this condition and Theorem \ref{PIs of two-dimensional algebras} (i) shows that
their codimensions are very close to the upper bound. For the algebra $A_{1,-1}$ the results in \cite{GMZ} give that
\[
\frac{2^n}{n^2}\leq c_n(A_{1,-1})\leq 2^{n+1}.
\]
Since $\dim(A_{1,-1})=2$ and $\dim(A_{1,-1}^2)=1$
Lemma \ref{growth of codimensions} implies $c_n(A_{1,-1})\leq 2^n$.
By \cite{DIT} and Theorem \ref{PIs of two-dimensional algebras} (ii) we have that $c_n(A_{1,-1})=c_n({\mathfrak B})=2^n-2$.
Again, this is very close to the upper bound $2^n$.
\end{remark}

\end{document}